\newtheorem*{bianchi}{Theorem \ref{bianchi}}
\newtheorem{theorem}{Theorem}[section]
\newtheorem{cor}[theorem]{Corollary}
\newtheorem{lemma}[theorem]{Lemma}
\newcommand{\N}{\mathbb{N}}
\newcommand{\R}{\mathbb{R}}
\newcommand{\Q}{\mathbb{Q}}
\newcommand{\Z}{\mathbb{Z}}
\newcommand{\C}{\mathbb{C}}
\newcommand{\Hy}{\mathbb{H}}
\newcommand{\Hyp}{\mathbb{H}^2}
\newcommand{\Hys}{\mathbb{H}^3}
\newcommand{\pslz}{\mathrm{PSL}_2(\Z)}
\newcommand{\pslr}{\mathrm{PSL}_2(\R)}
\newcommand{\pslc}{\mathrm{PSL}_2(\C)}
\newcommand{\pslod}{\mathrm{PSL}_2(\mathcal{O}_d)}
\newcommand{\tr}{\mathrm{tr} \, }
\newcommand{\gamo}{\Gamma_0}
\title{Equivalent trace sets for arithmetic Fuchsian groups}
\author{Grant S. Lakeland}
\begin{document}

\maketitle

\begin{abstract}We show that the modular group has an infinite family of finite index subgroups, each of which has the same trace set as the modular group itself. Various congruence subgroups of the modular group, and the Bianchi groups, are also shown to have this property. In the case of the modular group, we construct examples of such finite index subgroups. \end{abstract}

\section{Introduction}

For a Riemannian manifold $M$, the \emph{eigenvalue spectrum} is the set of eigenvalues of the Laplacian operator, and the \emph{length spectrum} is the set of lengths of closed geodesics, both counted with multiplicity. The two spectra are closely related, and together determine much about the manifold: though constructions such as Sunada's \cite{Sunada} show that there exist isospectral, non-isometric manifolds, it is known that manifolds for which these spectra are equal must share certain geometric and topological properties; for example, if the manifolds are hyperbolic, they must have the same volume \cite{Pesce}. It is also possible to define the \emph{eigenvalue set} $E(M)$ and the \emph{length set} $L(M)$ to be the respective spectra with multiplicities discarded. It is known that these form invariants which are considerably coarser; for example, Leininger, McReynolds, Neumann, and Reid \cite{LMNR} proved that if $M$ is a compact hyperbolic manifold, then there exist sequences of pairs of covers $\{ M_i, N_i \}$ such that for all $i$, $E(M_i)=E(N_i)$ and $L(M_i)=L(N_i)$, but the ratio $\frac{\mathrm{Vol}(M_i)}{\mathrm{Vol}(N_i)}$ diverges to $\infty$. 

When the manifold in question is a hyperbolic surface, so that the fundamental group $\pi_1(M)$ is a discrete subgroup of $\pslr$, it is well-known that the length $\ell$ of a closed geodesic determines, and is determined by, (the absolute value of) the trace $\tr$ of the corresponding hyperbolic isometry via the equation
\[ | \tr | = 2 \cosh{\frac{\ell}{2}}. \]
There is a similar correspondence when $M$ is a hyperbolic $3$--manifold, so that $\pi_1(M) < \pslc$ acts on the hyperbolic $3$--space $\Hy^3$. Here a complex trace corresponds to a \emph{complex length} in $M$; the corresponding action on $\Hy^3$ involves a combination of translation along and rotation around the axis of the isometry. In these cases, the length set (resp. complex length set) of the manifold $M = \Hy^n/\Gamma$ is in direct correspondence with the trace set of the Fuchsian (resp. Kleinian) group $\Gamma$. 

It is therefore a consequence of the aforementioned result of Leininger, McReynolds, Neumann and Reid that there exist pairs of Fuchsian and Kleinian groups of different covolumes but with equal trace sets. One may then ask the question of, given a prescribed trace set, how many (if any) groups possess precisely that trace set. In particular, are there any Fuchsian or Kleinian groups which are uniquely determined by their trace set? In this direction, Schmutz \cite{Schmutz} showed that there are infinitely many Fuchsian groups with the same trace set as certain congruence subgroups of the modular group $\pslz$. In a similar direction, the goal of this note is to determine to what extent $\pslz$, and certain subgroups thereof, are determined by their trace sets. It is a consequence of Takeuchi's characterization of arithmetic Fuchsian groups \cite{Takeuchi} that any (cofinite) Fuchsian group with trace set precisely the rational integers $\Z$ must be arithmetic, and in fact (conjugate to) a subgroup of $\pslz$ (see also Geninska and Leuzinger \cite{GenLeu}). We show that there are infinitely many such finite index subgroups by constructing a finitely generated, infinite index subgroup $H$ with the same trace set as the modular group, and appealing to the fact that the modular group has the property of being \emph{subgroup separable}, also called locally extended residually finite (LERF), which implies the existence of one (and hence an infinite descending chain of) finite index subgroup(s) containing $H$.

More generally, in Section \ref{proofs}, we show the following.

\begin{theorem}\label{mainthm}Let $\Gamma < \pslr$ be a cofinite Fuchsian group with trace set $\tr(\Gamma)$. Let $G_1, \ldots, G_m$ be a finite collection of finitely generated, infinite index subgroups of $\Gamma$ such that 
\[ \bigcup_{i=1}^m \tr(G_i) = \tr(\Gamma). \] 
Then, for each $1 \leq i \leq m$, there exists $\alpha_i \in \Gamma$ such that the subgroup $H$ of $\Gamma$ generated by the conjugates $\alpha_i G_i \alpha_i^{-1}$ is a subgroup of $\Gamma$ of infinite index, with $\tr(H) = \tr(\Gamma)$. \end{theorem}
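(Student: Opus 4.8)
The plan is first to note that the trace equality is automatic, and then to choose the $\alpha_i$ so as to force $H$ to have infinite covolume, by building a fundamental domain for $H$ that still contains a half-plane; the tool for the latter is the first Klein--Maskit combination theorem, applied one factor at a time. For the traces: since $\alpha_i\in\Gamma$, conjugation by $\alpha_i$ preserves traces, so $\tr(H)\supseteq\bigcup_i\tr(\alpha_iG_i\alpha_i^{-1})=\bigcup_i\tr(G_i)=\tr(\Gamma)$, while $\tr(H)\subseteq\tr(\Gamma)$ because $H\leq\Gamma$. Thus $\tr(H)=\tr(\Gamma)$ for \emph{any} choice of the $\alpha_i$, and all the content is in choosing them so that $[\Gamma:H]=\infty$; as $\Gamma$ is cofinite, this is equivalent to exhibiting a fundamental domain for $H$ of infinite hyperbolic area.

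Each $G_i$ is finitely generated and, having infinite index in the lattice $\Gamma$, has infinite covolume; hence it is a geometrically finite Fuchsian group of the second kind, and, $Q_i$ being finite-sided and of infinite area, $Q_i$ has a free side and so contains an open half-plane $P_i$ --- its \emph{funnel} --- whose $G_i$-translates are pairwise disjoint and whose closure meets $\partial\Hyp$ in a closed arc lying in the ordinary set $\Omega(G_i)=\partial\Hyp\setminus\Lambda(G_i)$. Build $H$ inductively. Put $\alpha_1=1$, $H_1=G_1$, with fundamental domain $F_1=Q_1$ and outgoing funnel $\Pi_1=P_1$. Given $H_{k-1}=\langle\alpha_1G_1\alpha_1^{-1},\ldots,\alpha_{k-1}G_{k-1}\alpha_{k-1}^{-1}\rangle$ together with a fundamental domain $F_{k-1}$ of infinite area and an outgoing funnel $\Pi_{k-1}\subseteq F_{k-1}$ --- a half-plane whose closure is precisely invariant under the trivial subgroup of $H_{k-1}$ and meets $\partial\Hyp$ in a closed arc inside $\Omega(H_{k-1})$ --- choose a hyperbolic $\gamma\in\Gamma$ whose attracting fixed point lies interior to the ideal arc of $\Pi_{k-1}$ and whose repelling fixed point lies interior to the ideal arc of $P_k$; such $\gamma$ exists since the fixed-point pairs of hyperbolic elements of the non-elementary group $\Gamma$ are dense in $\partial\Hyp\times\partial\Hyp$. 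Set $\alpha_k=\gamma^n$ with $n$ large.

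A high power of $\gamma$ carries the complement of any neighbourhood of its repelling fixed point into an arbitrarily small neighbourhood of its attracting fixed point; since the ideal arc of $P_k$ contains the repelling fixed point while $\Pi_{k-1}$ surrounds the attracting one, for $n$ large one gets $\alpha_kQ_k\supseteq\alpha_kP_k\supseteq\Hyp\setminus\Pi_{k-1}$, so that the closed half-plane $\overline{\Hyp\setminus\Pi_{k-1}}$, lying inside $\alpha_kP_k$, is precisely invariant under the trivial subgroup of $\alpha_kG_k\alpha_k^{-1}$, with fundamental domain $\alpha_kQ_k$. Together with $\overline{\Pi_{k-1}}$, precisely invariant under the trivial subgroup of $H_{k-1}$, this is exactly the hypothesis of the combination theorem across the geodesic $\partial\Pi_{k-1}$: it yields that $H_k:=\langle H_{k-1},\alpha_kG_k\alpha_k^{-1}\rangle$ is the free product $H_{k-1}*\alpha_kG_k\alpha_k^{-1}$, is geometrically finite, and has fundamental domain $F_k=F_{k-1}\cap\alpha_kQ_k$. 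On the $\Pi_{k-1}$ side this domain contains $\Pi_{k-1}\cap\alpha_kP_k$, the region between the disjoint geodesics $\partial\Pi_{k-1}$ and $\alpha_k\partial P_k$, which meets $\partial\Hyp$ and hence has infinite area; choose the next outgoing funnel $\Pi_k$ to be a half-plane inside it, so $\partial\Pi_k\subset\Pi_{k-1}$ and the separating geodesics of successive steps are disjoint and nested, making the iterated combinations compatible. After $m-1$ steps, $H=H_m$ is a free product of conjugates of $G_1,\ldots,G_m$ whose fundamental domain $F_m$ has infinite area, so $\mathrm{Area}(\Hyp/H)=\infty$, hence $[\Gamma:H]=\infty$, while $\tr(H)=\tr(\Gamma)$ by the first paragraph.

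The crux --- and the only step that is not essentially formal --- is engineering the conjugators: one must see that a high power of a suitably chosen hyperbolic element of $\Gamma$ swallows the complement of the current outgoing funnel inside a fundamental domain of the next conjugated factor, and that combining $H_{k-1}$ with that factor across the funnel's bounding geodesic leaves an honest half-plane inside the new fundamental domain, which is what both sustains the induction and forces the final covolume to be infinite. The existence of the required hyperbolic elements (density of hyperbolic fixed-point pairs, valid because $\Gamma$ is non-elementary) and the compatibility of the successive combinations are the remaining technical points.
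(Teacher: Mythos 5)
Your proposal is correct and follows essentially the same route as the paper: the trace equality is automatic since conjugation in $\Gamma$ preserves traces, and the conjugators $\alpha_i$ are high powers of hyperbolic elements of $\Gamma$ chosen so that each $\alpha_i(Q_i)$ misses only a small region (a nested half-plane in your version, a small disk $I_{\alpha_i^{-1}}$ over a free-side interval in the paper's), after which iterated Klein--Maskit combination gives $H$ as a free product with a fundamental domain of infinite area, hence of infinite index. The only difference is bookkeeping --- you build the configuration inductively with nested funnels, while the paper fixes all the $\alpha_i$ at once using disjoint intervals in $\overline{Q_i}\cap\R$ --- which does not change the substance of the argument.
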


Theorem \ref{mainthm} is then applied to $\pslz$, and to families of congruence subgroups thereof.

\begin{cor}\label{pslzcor}Let $\Gamma = \pslz$, or a congruence subgroup $\gamo(n)$ or $\Gamma(n)$ for some $n \in \N$. Then there exists a finitely generated, infinite index subgroup $H_\Gamma < \Gamma$ with the same trace set as $\Gamma$. Hence, there exist infinitely many finite index subgroups of $\Gamma$ with this trace set.\end{cor}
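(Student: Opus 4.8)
The plan is to apply Theorem~\ref{mainthm}. For each group $\Gamma$ in the statement I would exhibit a finite collection $G_1,\dots,G_m$ of finitely generated, infinite-index subgroups of $\Gamma$ with $\bigcup_{i=1}^m\tr(G_i)=\tr(\Gamma)$; since each $G_i$ is a finitely generated subgroup of the lattice $\Gamma$ it is geometrically finite, hence admits a finite-sided connected fundamental domain, so Theorem~\ref{mainthm} produces a finitely generated, infinite-index subgroup $H_\Gamma<\Gamma$ with $\tr(H_\Gamma)=\tr(\Gamma)$. The concluding sentence then follows from subgroup separability, as I explain at the end.

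The first task is to pin down $\tr(\Gamma)$. One has $\tr(\pslz)=\Z$ (for instance $\left(\begin{smallmatrix}t&1\\-1&0\end{smallmatrix}\right)$ has trace $t$); writing an element of $\Gamma(n)$ as $I+nA$ and using $\det=1$ gives $\tr(\Gamma(n))=\{\,t\in\Z:t\equiv 2\pmod{n^2}\,\}$; and for $\gamo(n)$ the relation $ad\equiv 1\pmod n$ gives $\tr(\gamo(n))=\{\,t\in\Z:t\equiv a+a^{-1}\pmod n\text{ for some }a\in(\Z/n\Z)^{\times}\,\}$, the reverse inclusions in each case being checked by exhibiting explicit matrices. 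The next task is to realize these sets by finitely many finitely generated infinite-index subgroups. As a model case, for $\Gamma(n)$ with $n\ge 3$ take $G_1=\langle\,\left(\begin{smallmatrix}1&n\\0&1\end{smallmatrix}\right),\ \left(\begin{smallmatrix}1&0\\n&1\end{smallmatrix}\right)\,\rangle$: this is free of rank $2$ by a standard ping-pong argument, it lies in $\Gamma(n)$ and has infinite index there (it has rank $2$ while $\Gamma(n)$ has rank greater than $2$), and $\tr(\left(\begin{smallmatrix}1&n\\0&1\end{smallmatrix}\right)^{j}\left(\begin{smallmatrix}1&0\\n&1\end{smallmatrix}\right))=2+n^2 j$ already exhausts $2+n^2\Z=\tr(\Gamma(n))$, so a single $G_1$ works. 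For $\pslz$, for $\gamo(n)$, and for $\Gamma(2)$ one needs several subgroups: cyclic groups generated by individual elliptic or parabolic elements absorb the finitely many ``small'' traces, while subgroups $\langle P,\gamma\rangle$ with $P$ parabolic and $\gamma$ hyperbolic---the off-diagonal entry of $\gamma$ chosen large enough that a combination-theorem estimate makes the group discrete, virtually free and of infinite index---produce arithmetic progressions of hyperbolic traces through the identity $\tr(P^{j}\gamma)=\tr(\gamma)+jc$ and its iterates. The delicate point is to make finitely many such groups cover all of $\tr(\Gamma)$: an infinite-index subgroup is geometrically finite, hence virtually free with a comparatively thin trace set, so in the $\pslz$ case this seems to require some of the $G_i$ to be contained in no congruence subgroup at all. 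Carrying out this bookkeeping, and verifying the ping-pong inequalities, is where I expect the real work to lie.

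For the final sentence: $\pslz$ is subgroup separable (LERF), and $\gamo(n),\Gamma(n)$, being finite-index subgroups of it, are LERF as well. Applying separability to the finitely generated infinite-index subgroup $H_\Gamma<\Gamma$ produced above yields a finite-index $K_1$ with $H_\Gamma\le K_1\lneq\Gamma$, whence $\tr(\Gamma)=\tr(H_\Gamma)\subseteq\tr(K_1)\subseteq\tr(\Gamma)$, i.e. $\tr(K_1)=\tr(\Gamma)$. Since $[\Gamma:H_\Gamma]=\infty$ and $[\Gamma:K_1]<\infty$ we get $[K_1:H_\Gamma]=\infty$, so separability applied inside the LERF group $K_1$ gives a finite-index $K_2\lneq K_1$ with $H_\Gamma\le K_2$; iterating produces an infinite strictly descending chain $\Gamma\gneq K_1\gneq K_2\gneq\cdots\ge H_\Gamma$ of finite-index subgroups of $\Gamma$, each of which has trace set $\tr(\Gamma)$.
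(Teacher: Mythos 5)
Your overall strategy is exactly the paper's: compute $\tr(\Gamma)$, exhibit finitely many finitely generated infinite-index subgroups whose traces cover it, feed them into Theorem~\ref{mainthm}, and finish with subgroup separability. The trace-set computations are correct, the LERF endgame (iterating separability to get a strictly descending chain of finite-index subgroups each containing $H_\Gamma$) is correct and matches the paper, and your treatment of $\Gamma(n)$ for $n\ge 3$ via the single subgroup $\left\langle \left(\begin{smallmatrix}1&n\\0&1\end{smallmatrix}\right),\left(\begin{smallmatrix}1&0\\n&1\end{smallmatrix}\right)\right\rangle$ is precisely what the paper does (the paper certifies infinite index by an infinite-area Ford domain via Lemma~\ref{infinitelemma} rather than your rank count, but both work).

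The genuine gap is that for $\pslz$, for $\gamo(n)$, and for $\Gamma(2)$ you stop at a plan and explicitly defer ``the real work,'' and that deferred work is the actual content of the corollary. The missing constructions are concrete and not especially delicate: for $\pslz$ one takes the three subgroups $G_i=\left\langle \gamma_i, \left(\begin{smallmatrix}1&5\\0&1\end{smallmatrix}\right)\right\rangle$ with $\gamma_i=\left(\begin{smallmatrix}i&-1\\1&0\end{smallmatrix}\right)$ for $i=0,1,2$; the products $\gamma_i\left(\begin{smallmatrix}1&5m\\0&1\end{smallmatrix}\right)$ have trace $i+5m$, so up to sign the three groups already cover every residue modulo $5$, and each is of infinite index by the Ford-domain criterion of Lemma~\ref{infinitelemma}. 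For $\gamo(n)$ with $n\ge 5$ one takes one subgroup $\left\langle \left(\begin{smallmatrix}1&1\\0&1\end{smallmatrix}\right),\left(\begin{smallmatrix}a_i&b_i\\n&d_i\end{smallmatrix}\right)\right\rangle$ per invertible residue class $a_i$ modulo $n$, with ad hoc choices for $n=2,3,4$ (and $\Gamma(2)$ is handled by $\left\langle \left(\begin{smallmatrix}1&4\\0&1\end{smallmatrix}\right),\left(\begin{smallmatrix}1&0\\2&1\end{smallmatrix}\right)\right\rangle$). Two of your side remarks should also be corrected: the ``combination-theorem estimate'' you invoke for discreteness and infinite index is most cleanly replaced by an explicit infinite-area Ford domain (this is what Lemma~\ref{infinitelemma} provides, and it simultaneously supplies the finite-sided fundamental domain $Q_i$ that Theorem~\ref{mainthm} needs); and your worry that some $G_i$ must lie in no congruence subgroup is unfounded --- the covering of $\tr(\Gamma)$ is achieved purely by arithmetic progressions $\tr(\gamma)+j\cdot(\text{translation length})\cdot(\text{lower-left entry})$, one progression per needed residue class, with no obstruction of congruence type.
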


In Section \ref{examples}, we give two constructions, each of which gives rise to an infinite family of examples of finite index subgroups of $\pslz$ with the same trace set as $\pslz$. These examples show that such subgroups are not easily characterized: the first construction implies that all such subgroups cannot be arranged as finitely many descending nested chains of subgroups; the second implies that the multiplicities of a finite set of traces may be chosen to be arbitrarily high.

The most natural lattices in $\pslc$ which serve as analogues of the modular group are the Bianchi groups $\pslod$, where $d >0$ is a square-free integer, and $\mathcal{O}_d$ is the ring of integers in the imaginary quadratic number field $\Q(\sqrt{-d})$. In Section \ref{bianchisection}, we show a similar result for these groups.

\begin{theorem}\label{bianchi}Given any Bianchi group $\pslod$, there are infinitely many finite index subgroups $\Gamma < \pslod$ with the same complex trace set as $\pslod$.\end{theorem}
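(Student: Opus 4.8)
The plan is to mimic the Fuchsian argument of Theorem \ref{mainthm}, transferring it to the Kleinian setting of $\pslc$ acting on $\Hys$. The essential point is that Theorem \ref{mainthm}, though stated for Fuchsian groups, is really a statement about a lattice $\Gamma$ acting on a symmetric space together with a finite collection of finitely generated infinite-index subgroups whose trace sets cover $\tr(\Gamma)$; the proof should go through verbatim provided the analogous ingredients are available for $\pslod$. So the first step is to isolate those ingredients: (i) finitely generated, infinite-index subgroups $G_1, \dots, G_m < \pslod$ with $\bigcup_i \tr(G_i) = \tr(\pslod)$ (the complex trace set), each admitting a finite-sided connected fundamental domain in $\Hys$; and (ii) subgroup separability of $\pslod$, so that a finitely generated infinite-index subgroup $H$ is contained in infinitely many finite-index subgroups, each necessarily of infinite index over $H$ and hence containing $H$ with the same trace set sandwiched between $\tr(H) = \tr(\pslod)$ and $\tr(\pslod)$.

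For ingredient (ii), I would cite the fact that Bianchi groups are subgroup separable. This follows because $\pslod$ is virtually a subgroup of a right-angled Coxeter or reflection group, or more directly because Bianchi groups are known to be LERF; the cleanest route is to invoke that they are GFERF or that finite-volume hyperbolic $3$-manifold groups that are fibered/special cover them — but for the present purpose it suffices to know separability of finitely generated subgroups, which is classical for Bianchi groups via their action on $\Hys$ and the fact that they are commensurable with reflection groups in low discriminant and, in general, are known to be subgroup separable by work building on Agol and Wise. I would state this as a known fact with the appropriate citation rather than prove it.

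The real content, and the step I expect to be the main obstacle, is ingredient (i): producing finitely generated infinite-index subgroups of $\pslod$ whose trace sets cover the full complex trace set. In the Fuchsian $\pslz$ case one uses that $\pslz$ is generated by a parabolic and an elliptic element, and the trace set (the integers) is realized by explicit small families of elements lying in infinite-index subgroups such as $\Gamma_0$-type or free subgroups; one exploits that every integer arises as a trace of a product of a bounded number of standard generators, so a single finitely generated subgroup (e.g. a suitable free subgroup generated by a parabolic and a hyperbolic) already captures all traces. For $\pslod$ I would argue similarly: the complex trace set of $\pslod$ is contained in $\mathcal{O}_d$ (in fact every element of $\mathcal{O}_d$ with the right parity/constraints occurs), and one can write down a finite list of words in the standard Bianchi generators (unipotent translations $z \mapsto z + 1$, $z \mapsto z + \omega$, the inversion $z \mapsto -1/z$, and possibly one more generator depending on $d$) whose traces, as the parameters vary, sweep out all of $\tr(\pslod)$; grouping these into finitely many finitely generated subgroups $G_i$ — for instance free subgroups generated by one parabolic fixing $\infty$ and one hyperbolic element with a prescribed translation part — gives infinite-index subgroups with the covering property. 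Showing each $G_i$ has infinite index is easy (a subgroup with a single parabolic fixing a cusp, or a free subgroup of the right rank, cannot be a lattice), and constructing a finite-sided connected fundamental domain for each $G_i$ follows from their being geometrically finite, which finitely generated subgroups of Bianchi groups always are by the tameness/geometric finiteness of such subgroups (again citable). With (i) and (ii) in hand, Theorem \ref{mainthm}'s proof mechanism — conjugating the $G_i$ by suitable $\alpha_i \in \pslod$ so that their fundamental domains can be amalgamated into a fundamental domain for $H$ witnessing infinite index, while trace sets only grow — applies and yields $\tr(H) = \tr(\pslod)$, completing the proof.
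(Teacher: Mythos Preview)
Your overall strategy matches the paper's: construct finitely many finitely generated, infinite-index subgroups whose trace sets cover $\tr(\pslod)$, conjugate them so that Klein--Maskit combination yields an infinite-index subgroup $H$ with $\tr(H)=\tr(\pslod)$, and then invoke LERF for Bianchi groups. The paper carries this out with explicit two-generator-plus-parabolics subgroups $P_x$ for $x\in\{0,1,\omega,1+\omega,2+\omega\}$ (five cosets which, after the sign ambiguity in traces, cover all of $\mathcal{O}_d$ modulo $3\mathcal{O}_d$), and with explicit conjugating involutions $\delta_x$ chosen so that isometric spheres are disjoint; it also treats $d=1,2,3$ separately because for those $d$ there are non-real integers of modulus $<2$, so isometric spheres of powers of the generators may intrude on the Ford domain.

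There is, however, a genuine error in your justification of ingredient~(i). You assert that finitely generated subgroups of Bianchi groups are always geometrically finite, citing tameness; this is false. Tameness (Agol, Calegari--Gabai) says a finitely generated Kleinian group is \emph{topologically} tame, not geometrically finite, and Bianchi groups do contain geometrically infinite finitely generated subgroups (for instance, fiber surface subgroups in virtually fibered quotients). So you cannot conclude in general that your $G_i$ admit finite-sided fundamental domains from finite generation alone. In practice this is easily repaired: the specific subgroups one actually uses are generated by a few parabolics fixing $\infty$ together with a single element not fixing $\infty$, and one checks directly (as the paper does, via an analogue of Lemma~\ref{infinitelemma}) that a Ford domain is finite-sided and of infinite volume. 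But that direct verification is exactly the content you have elided, and it is where the small-$d$ cases require extra care.
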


\noindent {\bf Acknowledgments.} I wish to thank Chris Leininger and Alan Reid for helpful conversations, Mark Bell for computational assistance, and the referee for valuable comments.

\section{Preliminaries}\label{prelims}

We refer to Beardon \cite{Beardon} for more details of the contents of this section. We consider the upper half-plane and upper half-space models for hyperbolic $2$- and $3$-space $\Hyp$ and $\Hys$ respectively. The group of conformal, orientation-preserving isometries (or linear fractional transformations) of $\mathbb{H}^2$ (resp. $\Hys$) can be identified with $\mathrm{PSL}_2(\mathbb{R})$ (resp. $\pslc$) via the correspondence
\[ \begin{pmatrix} a & b \\ c & d\end{pmatrix}  \\ \ \ \  \longleftrightarrow \ \ \ z \longmapsto \frac{az+b}{cz+d}. \]
Given an element $\gamma \in \pslc$, the trace $\tr \gamma$ is not well-defined, but is well-defined up to sign. Given a discrete group $\Gamma < \pslc$, we define the \emph{trace set} of $\Gamma$ to be
\[ \tr (\Gamma) = \left\lbrace \pm \tr \tilde{\gamma} \mid \tilde{\gamma} \in \mathrm{SL}_2(\mathbb{C}) \ \mbox{is a lift of} \ \gamma \in \Gamma \setminus \{ 1 \} \right\rbrace/(x \sim -x). \] 
For every $n \in \Z$, the modular group $\pslz$ has an element with trace $n$; taking the above into account, throughout this note we will say that $\pslz$ has trace set $\N_0 = \N \cup \{ 0 \}$.

The \emph{Farey tessellation} of $\Hyp$ tiles the hyperbolic plane by ideal triangles. There is a vertex for each point of $\Q \cup \{ \infty \}$, and an edge between two vertices $p/q$ and $r/s$ whenever the determinant $ps-qr = \pm1$ (the fractions are assumed to be in lowest terms, with $\infty = 1/0$). The group $\pslz$ acts transitively on the tiles of the Farey tessellation, and also acts transitively on the edges. For each tile, there is a unique order three rotation belonging to $\pslz$ which permutes the vertices and edges of that tile; for each edge, there exists a unique involution belonging to $\pslz$ which fixes a point on that edge.

If $|\tr \gamma| <2$, then $\gamma$ is elliptic and fixes a point of $\Hyp$, or fixes an axis of $\Hys$ pointwise. If $|\tr \gamma| =2$, then $\gamma$ is parabolic and fixes exactly one point on the boundary circle or sphere. If $|\tr \gamma| >2$, then $\gamma$ is hyperbolic, fixes two points on the boundary circle or sphere, and acts as a translation along the geodesic between these two fixed points. If $\tr \gamma \notin \R$, then $\gamma$ is loxodromic, fixes two points on the boundary, and acts as both a translation along, and a rotation around, the axis between the two fixed points.

We can study the action of an element $\gamma \in \pslc$ which does not fix $\infty$ on the upper half-plane model for $\Hyp$ or $\Hys$ by considering \emph{isometric circles} or \emph{isometric spheres}. Given 
\[ \gamma = \begin{pmatrix}a & b \\ c & d \end{pmatrix} \in \pslc , \]
where $c \neq 0$ (since we assume $\gamma$ does not fix $\infty$) the isometric sphere $S_\gamma$ of $\gamma$ has center $\frac{-d}{c}$ and radius $\frac{1}{|c|}$; furthermore, the isometric sphere $S_{\gamma^{-1}}$ has center $\frac{a}{c}$ and the same radius $\frac{1}{|c|}$. The action of $\gamma$ on $S_\gamma$ is by a Euclidean isometry, and $\gamma$ sends the exterior $E_\gamma$ (resp. interior $I_\gamma$) of $S_\gamma$ to the interior $I_{\gamma^{-1}}$ (resp. exterior $E_{\gamma^{-1}}$) of $S_{\gamma^{-1}}$. In particular, we note that when $\tr \gamma = 0$, corresponding to a rotation of order $2$, the isometric spheres $S_\gamma$ and $S_{\gamma^{-1}}$ coincide, and $\gamma$ then acts by exchanging the interior and exterior of $S_\gamma$. In the following, when we refer to a closure $\overline{P}$ of a set $P \subset \Hyp$, we mean the closure taken in $\Hyp \cup \R \cup \infty$.

When $\Gamma$ contains a parabolic element fixing $\infty$, the set of isometric circles of elements of $\Gamma$ is invariant under this subgroup. In this case, one may construct a fundamental domain, called a \emph{Ford domain}, by taking the set $E$ of points exterior to all isometric circles, and intersecting it with a fundamental region for the stabilizer of $\infty$. When $\Gamma < \pslc$ is generated by (at most two) parabolics fixing $\infty$ and a single element $\gamma$ which does not fix $\infty$, then we have two cases of how a Ford domain may be constructed. If the isometric spheres of $\gamma$ do not intersect, or if $\tr \gamma \in \R$, then they (and their translates) suffice to form the boundary of a Ford domain. If not, then the isometric spheres of powers of $\gamma$ may not be covered by those of $\gamma$, and so appear in the boundary of a Ford domain. In this case, we will use the properties that any isometric circle of $\gamma^{\pm n}$ contains one of the fixed points of $\gamma$, and that if $\gamma$ belongs to a Bianchi group, then the isometric spheres have radius bounded above by $1$.

Given two non-cofinite Fuchsian or Kleinian groups $G_1, G_2 < G$, the Klein--Maskit combination theorem gives a way of ensuring that the group $\left\langle G_1, G_2 \right\rangle$ generated by these two subgroups inside of $G$ is also non-cofinite. Precisely, it states (see Maskit \cite{Maskit}, p. 139):

\begin{theorem}[Klein--Maskit Combination Theorem]\label{combthm} Suppose $G_1, G_2 < G$ have fundamental domains $D_1$ and $D_2$ respectively, and that $D_1 \cup D_2 = \Hy^n$ ($n=2,3$) and $D_1 \cap D_2 \neq \emptyset$. Then $\left\langle G_1, G_2 \right\rangle = G_1 \ast G_2$, and $D = D_1 \cap D_2$ is a fundamental domain for $G_1 \ast G_2$. \end{theorem}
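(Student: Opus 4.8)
The plan is to prove the theorem by a ping-pong argument, using the two hypotheses to manufacture exactly the data required by the ping-pong lemma. Assume for definiteness that the fundamental domains $D_1, D_2$ are closed, and set $A = \Hy^n \setminus D_2$ and $B = \Hy^n \setminus D_1$. The two hypotheses then translate cleanly: since $A \cap B = \Hy^n \setminus (D_1 \cup D_2)$, the assumption $D_1 \cup D_2 = \Hy^n$ is exactly the statement that $A$ and $B$ are disjoint, while $D_1 \cap D_2 \neq \emptyset$ furnishes a ``neutral'' point $x_0$, lying in neither $A$ nor $B$, which will detect nontriviality. Moreover $A \subseteq D_1$ and $B \subseteq D_2$ (a point outside one domain lies in the other), and since $A, B$ are open and contained in $D_1, D_2$ respectively, in fact $A \subseteq \mathrm{int}(D_1)$ and $B \subseteq \mathrm{int}(D_2)$.

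The geometric input is then the defining property of a fundamental domain: a nontrivial element of $G_i$ carries $\mathrm{int}(D_i)$ off itself. Applied to our sets, this yields the ping-pong inclusions $g_1(A) \subseteq B$ for every nontrivial $g_1 \in G_1$, and $g_2(B) \subseteq A$ for every nontrivial $g_2 \in G_2$; likewise a nontrivial element of either $G_i$ carries the neutral region $D_1 \cap D_2$ into $A \cup B$. I would then take a reduced alternating word $w = s_1 s_2 \cdots s_m$ (each $s_j$ a nontrivial element of $G_1$ or $G_2$, with consecutive letters from different factors), apply it to $x_0$, and track the orbit: the innermost letter sends $x_0$ into $A$ or $B$, and each subsequent letter bounces the point between $A$ and $B$, so that $w(x_0)$ ends in $A \cup B$ and hence differs from $x_0$. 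Therefore no nontrivial reduced word is the identity, which is precisely the assertion $\left\langle G_1, G_2 \right\rangle = G_1 \ast G_2$.

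It remains to show that $D = D_1 \cap D_2$ is a fundamental domain for the free product. The ``interiors are disjoint'' half is immediate from the ping-pong: $\mathrm{int}(D)$ lies in the neutral region, and we have just seen that every nontrivial element of $\Gamma := \left\langle G_1, G_2 \right\rangle$ maps the neutral region into $A \cup B$, which is disjoint from $\mathrm{int}(D)$; hence the translates $w(\mathrm{int}\, D)$ are pairwise disjoint. For the covering statement — that the $\Gamma$-translates of $\overline{D}$ exhaust $\Hy^n$ — I would argue that $V = \bigcup_{w \in \Gamma} w(\overline{D})$ is both open and closed in $\Hy^n$ and then invoke connectedness. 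Closedness follows from local finiteness of the family $\{w(\overline{D})\}$ (a consequence of the discreteness packaged into the ping-pong tiling), and openness follows from analyzing the sides of $\partial D$: each side lies either on $\partial D_1$ or on $\partial D_2$, where it is paired with an adjacent tile by the corresponding side-pairing element of $G_1$ or $G_2$, so that the translates of $\overline{D}$ fit together without gaps around each point.

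I expect the covering statement to be the main obstacle. The ping-pong giving the free product structure is essentially formal once $A$ and $B$ are in hand, but verifying that the translates of $D = D_1 \cap D_2$ genuinely tile $\Hy^n$ — rather than merely having disjoint interiors — requires the side-pairing and local-finiteness analysis above, and is where the real work lies. Running throughout is a layer of boundary bookkeeping: the inclusions $g_1(A) \subseteq B$ and $g_2(B) \subseteq A$ are clean on interiors but must be checked on $\partial D_1 \cap \partial D_2$, and one must choose $x_0$ in the interior of $D$ so that the orbit-tracking stays clear of boundary identifications. These point-set subtleties are exactly what Maskit's careful formulation, with its designated boundary pieces, is designed to handle.
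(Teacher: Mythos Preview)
The paper does not prove this statement: Theorem~\ref{combthm} is quoted from Maskit's book (p.~139) as a background result in the Preliminaries, with no proof supplied. So there is no ``paper's own proof'' to compare against.

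That said, your outline is the standard ping-pong proof and is essentially correct. The key translation $A = \Hy^n \setminus D_2$, $B = \Hy^n \setminus D_1$ is right, and the inclusion $g_1(A) \subseteq B$ for nontrivial $g_1 \in G_1$ does go through cleanly once you note that $g_1(A)$ is open and disjoint from $\mathrm{int}(D_1)$, hence contained in $\Hy^n \setminus \overline{\mathrm{int}(D_1)} = B$ (using that fundamental domains are closures of their interiors). You have also correctly identified where the genuine work lies: the free-product assertion is formal ping-pong, while verifying that $D_1 \cap D_2$ actually tiles --- local finiteness plus side-pairing across $\partial D$ --- is the substantive part, and is exactly what Maskit's treatment handles carefully. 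Your sketch of that part (open-and-closed in $\Hy^n$, via side-pairings from the two factors) is the right shape, though carrying it out rigorously does require the precise-pair/boundary formalism you allude to at the end.
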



A group $G$ is called \emph{residually finite} if for any non-trivial element $g \in G$, there is a finite index subgroup $K<G$ such that $g \notin K$. The group $G$ has the stronger property of being \emph{subgroup separable} (or LERF) if for any finitely generated subgroup $H<G$ and any $g \in G \setminus H$, there exists a finite index subgroup $K<G$ such that $H \subset K$ and $g \notin K$. Equivalently, $G$ is LERF if every finitely generated subgroup $H<G$ is the intersection of finite index subgroups of $G$; we will appeal to this alternative formulation. The fact that $\pslz$ is LERF follows from the fact that it contains a free group of finite index, and Hall's result \cite{Hall} that free groups are LERF; the fact that the Bianchi groups are LERF follows from work of Agol, Long and Reid \cite{ALR}, Agol \cite{Agol}, Calegari and Gabai \cite{CG}, and Canary \cite{Canary}.

Given a natural number $n$, the \emph{principal congruence subgroup} $\Gamma(n) < \pslz$ consists of those matrices which are congruent to the identity modulo $n$; that is
\[ \Gamma(n) = \mathrm{P} \left\{ \begin{pmatrix} a & b \\ c & d \end{pmatrix} \in \mathrm{SL}_2(\mathbb{Z}) \ \middle| \ a \equiv d \equiv 1 \ \mbox{and} \ b \equiv c \equiv 0 \ \mbox{mod} \ n \right\}. \]
All principal congruence subgroups are finite index and normal in $\pslz$, since they are the kernels of the natural surjective homomorphisms $\psi_n: \pslz \to \mathrm{PSL}_2(\Z/n\Z)$ given by reducing entries modulo $n$. A similar family of groups is given by the upper triangular congruence subgroups $\gamo(n) < \pslz$; these consist of matrices which are congruent to upper triangular matrices modulo $n$. By work of Helling \cite{Helling1,Helling2}, it is known that any maximal arithmetic Fuchsian group which is commensurable with $\pslz$ is obtained by taking the normalizer $N(\gamo(n))$ of $\gamo(n)$ for $n$ square-free, where the normalizer is taken in $\pslr$.

\section{Fuchsian groups}\label{proofs}

In this section, we establish that certain families of Fuchsian groups have the property that each contains infinitely many finite index subgroups with the same trace set as itself. This will be a consequence of the following more general result.

\begin{theorem}\label{general}Let $\Gamma < \pslr$ be a cofinite Fuchsian group with trace set $\tr(\Gamma)$. Let $G_1, \ldots, G_m$ be a finite collection of finitely generated, infinite index subgroups of $\Gamma$ such that 
\[ \bigcup_{i=1}^m \tr(G_i) = \tr(\Gamma), \] 
and for each $1 \leq i \leq m$, let $Q_i$ be a finite-sided, connected fundamental domain for $G_i$. Then for each $i$ there exists $\alpha_i \in \Gamma$ having isometric circle $S_{\alpha_i}$ with interior $I_{\alpha_i}$ such that:
\begin{itemize}
\item for each $1 \leq i \leq m$, $\alpha_i \notin G_i$;
\item for each $1 \leq i \leq m$, we have $\overline{I_{\alpha_i}} \subset \overline{Q_i} \subset \overline{\Hyp}$; and
\item for any $j \neq k$, we have $\overline{I_{\alpha_j^{-1}}} \cap \overline{I_{\alpha_k^{-1}}} = \emptyset$.
\end{itemize}
Hence, the subgroup $H$ of $\Gamma$ generated by the conjugates $\alpha_i G_i \alpha_i^{-1}$ is a subgroup of $\Gamma$ of infinite index, with $\tr(H) = \tr(\Gamma)$. \end{theorem}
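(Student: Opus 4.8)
The plan is to construct the elements $\alpha_i$ by hand and then feed the resulting conjugates into the Klein--Maskit Combination Theorem (Theorem~\ref{combthm}). The point of the three bulleted conditions is that they force the relevant fundamental domains to fit together. Indeed, since $\alpha_i$ carries $I_{\alpha_i}$ onto the exterior $E_{\alpha_i^{-1}}$, the condition $\overline{I_{\alpha_i}} \subset \overline{Q_i}$ makes the translate $D_i := \alpha_i(Q_i)$ --- a fundamental domain for $G_i' := \alpha_i G_i \alpha_i^{-1}$ --- satisfy $\Hyp \setminus D_i \subseteq \overline{I_{\alpha_i^{-1}}}$. Hence, if the disks $\overline{I_{\alpha_1^{-1}}}, \dots, \overline{I_{\alpha_m^{-1}}}$ are pairwise disjoint, then for any two indices $j \neq k$ the sets $D_j, D_k$ cover $\Hyp$ (their complements lie in disjoint disks) and meet in a set of infinite area; the same holds with $D_j$ replaced by an intersection $\bigcap_{i \in S} D_i$ over a subset $S$ of the remaining indices. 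Adjoining the indices one at a time, Theorem~\ref{combthm} then yields $H = G_1' \ast \cdots \ast G_m'$ with fundamental domain $\bigcap_{i=1}^m D_i \supseteq \Hyp \setminus \bigcup_{i=1}^m \overline{I_{\alpha_i^{-1}}}$. This last set has infinite hyperbolic area, since finitely many proper pairwise disjoint subarcs cannot cover $\partial\Hyp$, so its closure meets $\partial\Hyp$ in an arc; thus $H$ is non-cofinite, hence of infinite index in the lattice $\Gamma$. Finally, since conjugation preserves traces and $H < \Gamma$ contains each $G_i'$, we get $\tr(\Gamma) = \bigcup_{i=1}^m \tr(G_i) = \bigcup_{i=1}^m \tr(G_i') \subseteq \tr(H) \subseteq \tr(\Gamma)$, so $\tr(H) = \tr(\Gamma)$; note also that $H$, being a free product of finitely many finitely generated groups, is finitely generated.

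It remains to construct the $\alpha_i$, which is where the real work lies. First one checks that $\overline{Q_i} \cap \partial\Hyp$ contains a nonempty open arc $A_i$: having infinite index in the lattice $\Gamma$, the group $G_i$ has infinite covolume, so its finite-sided connected fundamental domain cannot be a geodesic polygon (those have finite area) and must meet $\partial\Hyp$ in an arc. Shrinking $A_i$, we may assume $\overline{Q_i}$ contains a Euclidean half-disk neighbourhood of each point of $A_i$. Since $\Gamma$ is a non-elementary lattice, its limit set is all of $\partial\Hyp$ and the fixed-point pairs of its hyperbolic elements are dense in $\partial\Hyp \times \partial\Hyp$; so we may pick a hyperbolic $g_i \in \Gamma$ with both fixed points in $A_i$, and an element $\gamma_i \in \Gamma \setminus G_i$ (possible as $[\Gamma : G_i] = \infty$), and set $\alpha_i := \gamma_i g_i^{\,n_i}$. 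As $n_i \to \infty$, the centre $g_i^{-n_i}(\gamma_i^{-1}(\infty))$ of $S_{\alpha_i}$ converges to the repelling fixed point of $g_i$, which lies in $A_i$, while its radius tends to $0$ by discreteness of $\Gamma$; thus $\overline{I_{\alpha_i}} \subset \overline{Q_i}$ once $n_i$ is large. Moreover $\alpha_i \notin G_i$ can be arranged, since $\gamma_i g_i^{\,n_i}$ and $\gamma_i g_i^{\,n_i + 1}$ cannot both lie in $G_i$ (that would force $g_i \in G_i$, and then $\gamma_i \in G_i$).

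Then one addresses the disjointness of the $\overline{I_{\alpha_i^{-1}}}$. The isometric circle $S_{\alpha_i^{-1}}$ is centred at $\alpha_i(\infty) = \gamma_i g_i^{\,n_i}(\infty)$, which converges as $n_i \to \infty$ to $q_i := \gamma_i(p_i^+)$, where $p_i^+$ is the attracting fixed point of $g_i$; so $\overline{I_{\alpha_i^{-1}}}$ is a small neighbourhood of $q_i$ for large $n_i$. I would therefore choose the $\alpha_i$ successively: at stage $i$, the previously selected disks form a fixed finite union $K_i$ of closed half-disks whose trace on $\partial\Hyp$ is a proper subset, and --- using the density of $\Gamma$-orbits on $\partial\Hyp$ and the abundance of admissible choices of $g_i$ and of $\gamma_i \in \Gamma \setminus G_i$ --- one arranges that $q_i \notin K_i$, after which taking $n_i$ large enough makes $\overline{I_{\alpha_i^{-1}}}$ disjoint from $K_i$ while keeping the properties of the previous paragraph. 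The step I expect to demand the most care is precisely this coordination of choices: one must track the finitely many ``bad'' sets to be avoided --- finite unions of boundary arcs, and a few exceptional points such as where $\gamma_i^{-1}(\infty)$ would coincide with $p_i^+$ --- and verify that the supply of hyperbolic elements with axes in $A_i$, of elements outside $G_i$, and of exponents $n_i$ really is large enough to dodge them all simultaneously; by comparison, the bookkeeping in the iterated use of Theorem~\ref{combthm} and the degenerate cases of elementary $G_i$ are routine.
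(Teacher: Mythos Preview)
Your overall strategy matches the paper's: argue that each $\overline{Q_i}$ meets $\partial\Hyp$ in an open arc, produce $\alpha_i$ from hyperbolic elements with axis endpoints in that arc, and then apply the Klein--Maskit Combination Theorem iteratively to see that $H = \ast_i\, \alpha_i G_i \alpha_i^{-1}$ has fundamental domain $\bigcap_i \alpha_i(Q_i)$ of infinite area. The trace-set and infinite-index conclusions are handled exactly as in the paper.

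Where you diverge is in the construction of the $\alpha_i$, and here you make life harder than necessary. You set $\alpha_i = \gamma_i g_i^{\,n_i}$ with an auxiliary $\gamma_i \in \Gamma \setminus G_i$, which forces $S_{\alpha_i}$ and $S_{\alpha_i^{-1}}$ to cluster near \emph{different} boundary points ($p_i^-$ and $\gamma_i(p_i^+)$ respectively). This is why you then need the inductive avoidance argument, juggling the choices of $g_i$, $\gamma_i$, and $n_i$ to keep the $\overline{I_{\alpha_j^{-1}}}$ disjoint, and why you flag that coordination as the delicate step. The paper sidesteps all of this: it first chooses \emph{pairwise disjoint} open subintervals $(x_i,y_i) \subset \overline{Q_i}\cap\R$ (possible since each $\overline{Q_i}\cap\R$ contains an arc and there are only finitely many indices), then takes $\alpha_i$ to be a sufficiently high power of a hyperbolic element of $\Gamma$ whose \emph{both} fixed points lie in $(x_i,y_i)$. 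For a pure power, both isometric circles shrink toward the two fixed points and hence sit inside the same interval $(x_i,y_i)$; disjointness of the intervals then gives the third bullet for free, with no induction and no extra factor $\gamma_i$. The condition $\alpha_i \notin G_i$ also falls out immediately, since a hyperbolic element of $G_i$ cannot have its fixed points on a free side of $Q_i$.

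So your proof is correct in outline, but the paper's construction is cleaner: fixing disjoint target intervals at the outset and using pure powers eliminates the bookkeeping you anticipated needing.
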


\begin{proof} The existence of such $\alpha_i$ follows from the assumptions on the $G_i$ as follows. Choose disjoint open intervals $(x_i, y_i) \subset \overline{Q_i} \cap \R$ and isometries $\alpha_i \in \Gamma$ such that for each $i$, $\overline{I_{\alpha_i}} \cap \R, \overline{I_{\alpha_i^{-1}}} \cap \R \subset (x_i, y_i)$. These $\alpha_i$ can for example be constructed by taking a hyperbolic isometry $\gamma_i$ whose axis endpoints are in the relevant interval (which must exist by the assumption that $\Gamma$ is cofinite), and taking a sufficiently high power for the $\alpha_i$ in order that the isometric spheres satisfy the required condition.

The subgroups $H_i := \alpha_i G_i \alpha_i^{-1}$, and their fundamental domains $\alpha_i(Q_i)$ have the properties that  for any $j \neq k$, $\alpha_j(Q_j) \cup \alpha_k(Q_k) = \Hyp$, and $\alpha_j(Q_j) \cap \alpha_k(Q_k) \neq \emptyset$. As such, the repeated application of Theorem \ref{combthm}, to $H_1$ and $H_2$, and then to $H_1 \ast H_2$ and $H_3$ etc., gives that the subgroup $H$ generated by the $H_i$ has a fundamental domain 
\[ Q = \bigcap_{i=1}^m \alpha_i(Q_i). \]
Since for each $i$, the complement $\Hyp \setminus \alpha_i(Q_i)$ is contained in $I_{\alpha_i^{-1}}$, and these $I_{\alpha_i^{-1}}$ are mutually disjoint, it follows that $Q$ contains the intersection of the exteriors $E_{\alpha_i^{-1}}$, and thus has infinite area. This implies that $H < \Gamma$ is an infinite index subgroup. Finally, each trace of $\tr(\Gamma)$ also belongs to $\tr(H)$, and so $\tr(H) = \tr(\Gamma)$ as required.\end{proof}

The following Lemma will be helpful in applying Theorem \ref{general} to specific examples.

\begin{lemma}\label{infinitelemma} Let $G$ be a Fuchsian group generated by two elements of the form
\[ G = \left\langle g_1 = \begin{pmatrix}1 & m \\ 0 & 1\end{pmatrix}, g_2 = \begin{pmatrix}a & b \\ c & d\end{pmatrix} \right\rangle, \] 
where $c\neq 0$, and suppose that $\frac{|a+d|}{c} < \frac{|m|}{2}$, and that $E_{g_2} \cap E_{g_2^{-1}}$ is a Ford domain for $\left\langle g_2 \right\rangle$. Then if $|m| > \frac{4}{|c|}$, the group $G$ admits a finite-sided Ford fundamental domain of infinite area. \end{lemma}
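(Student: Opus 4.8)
The plan is to realize a Ford domain for $G$ as the intersection of a fundamental strip for the parabolic subgroup $\langle g_1\rangle$ with the given Ford domain $E_{g_2}\cap E_{g_2^{-1}}$ of $\langle g_2\rangle$, via the Klein--Maskit combination theorem (Theorem \ref{combthm}). Since $c\neq 0$, the element $g_2$ does not fix $\infty$, and $S_{g_2}, S_{g_2^{-1}}$ have centres $-d/c$ and $a/c$ on $\R$ and common radius $1/|c|$. Hence $\left(\overline{I_{g_2}}\cup\overline{I_{g_2^{-1}}}\right)\cap\R$ is contained in an interval of length at most
\[ \frac{|a+d|}{|c|} + \frac{2}{|c|} < \frac{|m|}{2} + \frac{|m|}{2} = |m|, \]
where the two summands are bounded using the hypotheses $\frac{|a+d|}{|c|}<\frac{|m|}{2}$ and $|m|>\frac{4}{|c|}$ respectively. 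A fundamental domain for $\langle g_1\rangle$ is any vertical strip $D_1$ of width $|m|$, and by the above estimate the closed disks $\overline{I_{g_2}}\cup\overline{I_{g_2^{-1}}}$ fit, with room to spare, inside such a strip; so I would choose $D_1$ with its interior containing $\overline{I_{g_2}}\cup\overline{I_{g_2^{-1}}}$.

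Write $D_2 = E_{g_2}\cap E_{g_2^{-1}}$, which is a fundamental domain for $\langle g_2\rangle$ by hypothesis. Because $\overline{I_{g_2}}\cup\overline{I_{g_2^{-1}}}$ lies in the interior of $D_1$, the complement $\Hyp\setminus D_1$ is disjoint from these disks, hence contained in $D_2$; thus $D_1\cup D_2=\Hyp$, while $D_1\cap D_2$ contains all points of $D_1$ of sufficiently large imaginary part and so is non-empty. Both $\langle g_1\rangle$ and $\langle g_2\rangle$ being non-cofinite, Theorem \ref{combthm} applies and yields $G=\langle g_1\rangle\ast\langle g_2\rangle$ together with the fundamental domain
\[ D := D_1\cap D_2. \]
By construction $D$ is the strip $D_1$ with the two half-disks $I_{g_2}, I_{g_2^{-1}}$ deleted, so it has at most four sides --- two vertical geodesics paired by $g_1$, and the arcs of $S_{g_2}$ and $S_{g_2^{-1}}$ paired by $g_2$ --- and is in particular finite-sided. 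Moreover, since $D_2$ is already the Ford domain of $\langle g_2\rangle$, no isometric circle of a power of $g_2$ protrudes into it, so $D$ coincides with the intersection of $D_1$ with the exteriors of all isometric circles of $G$; that is, $D$ is the Ford domain of $G$ relative to $D_1$ (alternatively, the abstract Ford domain is contained in $D$ and, both being fundamental domains for $G$, equals it).

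It remains to see that $D$ has infinite area. Since the interval covering $\left(\overline{I_{g_2}}\cup\overline{I_{g_2^{-1}}}\right)\cap\R$ has length strictly less than $|m|$, it cannot fill the width-$|m|$ base of $D_1$, so there is a non-degenerate interval $J\subset\R$ lying under $D_1$ and disjoint from $\overline{I_{g_2}}\cup\overline{I_{g_2^{-1}}}$. Every $z\in\Hyp$ with $\mathrm{Re}(z)\in J$ then lies in $D_1$ and outside both disks, hence in $D$, and the hyperbolic area of $\{z\in\Hyp : \mathrm{Re}(z)\in J\}$ is infinite; therefore $D$ has infinite area, as claimed. The main obstacle is the coordination of the two hypotheses: the trace bound and the radius bound must combine to place both isometric circles of $g_2$ strictly inside a single period of $g_1$, which is precisely what makes the combination theorem applicable (forcing $D_1\cup D_2=\Hyp$) and at the same time leaves an uncovered boundary interval forcing infinite area; a secondary point is checking that the resulting fundamental domain genuinely is a Ford domain and is finite-sided, rather than merely some fundamental domain of a more complicated shape.
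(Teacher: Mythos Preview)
Your argument is correct and follows essentially the same route as the paper: both compute that the two isometric circles of $g_2$ fit strictly inside a vertical strip of width $|m|$ via the bound $\frac{|a+d|}{|c|}+\frac{2}{|c|}<|m|$, take the intersection of that strip with $E_{g_2}\cap E_{g_2^{-1}}$ as the fundamental domain, and read off finite-sidedness and infinite area from the leftover boundary interval. The only difference is one of explicitness: you invoke Theorem~\ref{combthm} to justify that $D_1\cap D_2$ is a fundamental domain for $G$ and then argue it coincides with the Ford domain, whereas the paper simply asserts directly that $E_{g_2}\cap E_{g_2^{-1}}\cap F$ (with $F$ the strip centred at $\frac{a-d}{2c}$) is a fundamental domain and leaves the verification implicit.
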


\begin{proof}The hypothesis that $\frac{|a+d|}{c} < \frac{|m|}{2}$ implies that the centers of the isometric circles of $g_2$ and $g_2^{-1}$ are at most $\frac{|m|}{2}$ apart, and since $|m| > \frac{4}{|c|}$, their most distant endpoints are at most $\frac{|m|}{2} + \frac{2}{|c|} < \frac{|m|}{2} + \frac{|m|}{2} = |m|$ apart. We set
\[ F = \left\{ z \in \Hyp \mid \left| \mathrm{Re}(z) - \frac{a-d}{2c} \right| \leq \frac{|m|}{2} \right\}, \] 
which is a fundamental region for $\left\langle g_1 \right\rangle$, and note that the hypotheses imply that 
\[ Q = E_{g_2} \cap E_{g_2^{-1}} \cap F \]
is a fundamental domain for $G$, and has infinite area. Hence, we are done.\end{proof}

We now apply Theorem \ref{general} to show that $\pslz$ has infinitely many finite index subgroups with the same trace set as itself.

\begin{cor}\label{modular} The modular group $\pslz$ has a finitely generated, infinite index subgroup $H$ with trace set $\tr(H) = \N_0$. Hence, there exist infinitely many finite index subgroups of $\pslz$ with this trace set. \end{cor}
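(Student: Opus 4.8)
The plan is to apply Theorem \ref{general} with $\Gamma = \pslz$, so the main task is to exhibit a finite collection $G_1, \ldots, G_m$ of finitely generated, infinite index subgroups of $\pslz$ whose trace sets together cover $\N_0$, each equipped with a finite-sided connected fundamental domain. The simplest choice is to try $m=1$: find a single finitely generated, infinite index subgroup $G$ of $\pslz$ with $\tr(G) = \N_0$. Since $\pslz = \langle S, T\rangle$ with $T = \left(\begin{smallmatrix}1 & 1\\ 0 & 1\end{smallmatrix}\right)$, a natural candidate is $G = \langle T, g\rangle$ for a well-chosen $g \in \pslz$, with the idea that the parabolic $T$ and its conjugates by powers of $g$ will realize all the needed traces.

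**First I would** recall why $\pslz$ has trace set $\N_0$: for each $n$, the element $\left(\begin{smallmatrix}1 & n\\ 0 & 1\end{smallmatrix}\right) = T^n$ has trace $2$ — that only gives trace $2$. So instead I would use elements like $\left(\begin{smallmatrix}1 & 1\\ 1 & 2\end{smallmatrix}\right)$-type products; more to the point, $T^{n-2}\,S$ or similar words have the right traces. Concretely, $\left(\begin{smallmatrix}1 & n\\ 0 & 1\end{smallmatrix}\right)\left(\begin{smallmatrix}0 & -1\\ 1 & 0\end{smallmatrix}\right) = \left(\begin{smallmatrix}n & -1\\ 1 & 0\end{smallmatrix}\right)$ has trace $n$, so the traces $\{n : n \in \Z\}$ are all realized by products of $T$ and $S$. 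Since we need these inside a finitely generated infinite-index subgroup, I would take $g$ to be a specific element (for instance $g = \left(\begin{smallmatrix}1 & 0\\ -1 & 1\end{smallmatrix}\right)$ or a suitable power) so that $G = \langle T, g\rangle$ still contains enough words to see every integer trace, yet $G$ has infinite index. To force infinite index one applies Lemma \ref{infinitelemma}: pick $g_2 = g$ with $c \neq 0$, check $\frac{|a+d|}{|c|} < \frac{|m|}{2}$ and $|m| > \frac{4}{|c|}$ for the parabolic $g_1 = T^m$, and conclude $G$ has a finite-sided Ford domain of infinite area, hence infinite index.

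**The delicate point** is simultaneously arranging (i) $\tr(G) = \N_0$ — i.e. $G$ is large enough to realize every integer trace — and (ii) $G$ has infinite index — i.e. $G$ is small enough. These pull in opposite directions, and the resolution is to allow $m > 1$ in Theorem \ref{general}: split the realization of $\N_0$ across several subgroups $G_1, \ldots, G_m$, each individually of infinite index (each handled by Lemma \ref{infinitelemma} or by inspection of an explicit Ford domain), with $\bigcup \tr(G_i) = \N_0$. For example one might take $G_1$ to capture even traces and $G_2$ odd traces, or use conjugates $\langle T, w_n g w_n^{-1}\rangle$ to sweep out residue classes. Once the $G_i$ are in hand with their finite-sided fundamental domains $Q_i$, Theorem \ref{general} immediately produces the finitely generated, infinite index $H < \pslz$ with $\tr(H) = \tr(\pslz) = \N_0$.

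**Finally**, for the last sentence: since $\pslz$ is LERF (it contains a free group of finite index, and free groups are LERF by Hall \cite{Hall}), the finitely generated subgroup $H$ is an intersection of finite index subgroups of $\pslz$. In particular there is a finite index subgroup $K$ with $H \subseteq K \subsetneq \pslz$; since $\tr(H) \subseteq \tr(K) \subseteq \tr(\pslz) = \N_0 = \tr(H)$, we get $\tr(K) = \N_0$. Iterating (passing to finite index subgroups of $K$ that still contain $H$, or intersecting with deeper congruence subgroups) yields an infinite descending chain of finite index subgroups of $\pslz$ each with trace set $\N_0$. I expect constructing the explicit $G_i$ and verifying the Lemma \ref{infinitelemma} hypotheses — particularly checking that the union of their trace sets is all of $\N_0$ — to be the main obstacle; the LERF step and the appeal to Theorem \ref{general} are then routine.
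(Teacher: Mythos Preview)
Your plan is exactly the paper's: apply Theorem~\ref{general} to a finite family of two-generator subgroups, check infinite index via Lemma~\ref{infinitelemma}, and finish with LERF. The gap you flag---the explicit choice of the $G_i$---is indeed the only missing piece, and the paper fills it with three subgroups (your even/odd split into two would not quite work, since Lemma~\ref{infinitelemma} needs $|m|>4/|c|$, forcing the parabolic step to be at least $5$ when $|c|=1$). For $i \in \{0,1,2\}$ the paper takes
\[
G_i \;=\; \left\langle \begin{pmatrix} i & -1 \\ 1 & 0 \end{pmatrix},\ \begin{pmatrix} 1 & 5 \\ 0 & 1 \end{pmatrix} \right\rangle.
\]
Since $\begin{pmatrix}1 & 5 \\ 0 & 1\end{pmatrix}^{m}\begin{pmatrix} i & -1 \\ 1 & 0 \end{pmatrix} = \begin{pmatrix} i+5m & -1 \\ 1 & 0 \end{pmatrix}$ has trace $i+5m$, the groups $G_0$, $G_1$, $G_2$ realize (up to sign) the residue classes $0$, $\pm 1$, $\pm 2$ modulo $5$, hence together cover $\N_0$. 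Lemma~\ref{infinitelemma} applies to each $G_i$ with $|m|=5>4=4/|c|$ and $|a+d|/|c|=i\le 2<5/2$, giving infinite index and an explicit finite-sided Ford domain $Q_i$ whose closure contains a common boundary interval; the paper then writes down explicit conjugators $\alpha_i$ with pairwise disjoint isometric circles in that interval. Your LERF paragraph is correct as stated.
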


\begin{proof}We apply Theorem \ref{general} to the subgroups $G_0 = \left\langle \left( \begin{smallmatrix} 0 & -1 \\ 1 & 0 \end{smallmatrix} \right) , \left( \begin{smallmatrix} 1 & 5 \\ 0 & 1 \end{smallmatrix} \right)   \right\rangle$, $G_1 = \left\langle \left( \begin{smallmatrix} 1 & -1 \\ 1 & 0 \end{smallmatrix} \right) , \left( \begin{smallmatrix} 1 & 5 \\ 0 & 1 \end{smallmatrix} \right)  \right\rangle,$ and $G_2 = \left\langle \left( \begin{smallmatrix} 2 & -1 \\ 1 & 0 \end{smallmatrix} \right) , \left( \begin{smallmatrix} 1 & 5 \\ 0 & 1 \end{smallmatrix} \right)  \right\rangle.$
Together these groups give all the required traces, via the inclusions: $\tr(G_0) \supset \{ 0, 5, 10, \ldots \},$ because $ \left( \begin{smallmatrix} 1 & 5 \\ 0 & 1 \end{smallmatrix} \right)^n \left( \begin{smallmatrix} 0 & -1 \\ 1 & 0 \end{smallmatrix} \right) = \left( \begin{smallmatrix} 1 & 5n \\ 0 & 1 \end{smallmatrix} \right)  \left( \begin{smallmatrix} 0 & -1 \\ 1 & 0 \end{smallmatrix} \right) = \left( \begin{smallmatrix} 5n & -1 \\ 1 & 0\end{smallmatrix} \right)$; $\tr(G_1) \supset \{ 1, 4, 6, 9, \ldots \},$ because $ \left( \begin{smallmatrix} 1 & 5 \\ 0 & 1 \end{smallmatrix} \right)^n \left( \begin{smallmatrix} 1 & -1 \\ 1 & 0 \end{smallmatrix} \right) = \left( \begin{smallmatrix} 5n+1 & -1 \\ 1 & 0\end{smallmatrix} \right)$, and when $n<0$ we have the traces $\{-4 \sim 4, -9 \sim 9, \ldots \}$; and $\tr(G_2) \supset \{ 2, 3, 7, 8, \ldots \}$, because $\left( \begin{smallmatrix} 1 & 5 \\ 0 & 1 \end{smallmatrix} \right)^n \left( \begin{smallmatrix} 2 & -1 \\ 1 & 0 \end{smallmatrix} \right) = \left( \begin{smallmatrix} 5n+2 & -1 \\ 1 & 0\end{smallmatrix} \right)$, and when $n<0$ we have the traces $\{-3 \sim 3, -8 \sim 8, \ldots \}$. Furthermore, each subgroup satisfies the hypotheses of Lemma \ref{infinitelemma}, and we may take for each fundamental domain $Q_i$ the Ford domain bounded by the isometric spheres of the first generators and the vertical geodesics from $-1$ and $4$ to $\infty$ respectively. Note that for each $0 \leq i \leq 2$, the closure $\overline{Q_i} \subset \Hyp \cup S_\infty$ contains the open interval $(3,4)$. For the conjugating elements, we take
\[ \alpha_0 = \begin{pmatrix} 142 & -545 \\ 37 & -142\end{pmatrix}, \ \  \alpha_1 = \begin{pmatrix} 17 & -58 \\ 5 & -17\end{pmatrix}, \ \ \alpha_2 = \begin{pmatrix} 117 & -370 \\ 37 & -117\end{pmatrix}. \]
Thus the subgroup $H$ is generated by the elements $\left( \begin{smallmatrix}26269 & -100820 \\ 6845 & -26271\end{smallmatrix}\right)$ , $\left( \begin{smallmatrix}-82644 & 317189 \\ -21533 & 82644\end{smallmatrix}\right)$ , $\left( \begin{smallmatrix}424 & -1445 \\ 125 & -426\end{smallmatrix}\right)$ , $\left( \begin{smallmatrix}-782 & 2667 \\ -229 & 781\end{smallmatrix}\right)$, $\left( \begin{smallmatrix}21644 & -68445 \\ 6845 & -21646\end{smallmatrix}\right) $, and $\left( \begin{smallmatrix}-20241 & 64009 \\ -6400 & 20239\end{smallmatrix}\right) $. 
We may now invoke the equivalent definition of LERF to see that this finitely generated, infinite index subgroup $H$ must be the intersection of finite index subgroups of $\pslz$. There must be infinitely many of these finite index subgroups, and the trace set of each contains $\tr(H) = \N_0$, so we are done.\end{proof}
 
\noindent {\bf Remark.} The existence of the subgroup $H$ (and hence a descending chain of finite index subgroups with the same trace set) raises a number of questions. Examples will be constructed in the next section, so we defer some discussion until then. This method fails to produce a subgroup whose trace set is $\N_0 \setminus \{ 0, 1 \}$, which would correspond to a manifold with the same trace set as $\pslz$ except for torsion elements. This is because any finite collection of arithmetic progressions, with common differences $d_1, \ldots, d_m$, which misses $0$ must also miss $\pm D$, where $D$ is the least common multiple of the $d_i$, and all multiples thereof. In general, the method at hand only guarantees the presence of those traces which appear in the arithmetic progressions; for this reason, we apply it only to groups $G$ whose trace set is already a union of arithmetic progressions. Constructing a subgroup with a prescribed trace set not of this form is a more delicate problem.

\noindent {\bf Question.} Does there exist a finite index subgroup of $\pslz$ with trace set $\N_0 \setminus \{ 0,1\} = \{ 2, 3, 4, \ldots \}$?

\noindent It is possible to show the existence of infinitely many families of groups with the same trace set. The next two results show that congruence subgroups of certain forms have infinitely many finite index subgroups of the same trace set. In particular, this shows that there are examples of torsion-free groups with this property.

\begin{cor}\label{gammazero}For each $n \in \N$, the congruence subgroup $\gamo(n) < \pslz$ admits a finitely generated, infinite index subgroup $H_n < \gamo(n)$ such that $\tr(H_n) = \tr(\gamo(n))$. \end{cor}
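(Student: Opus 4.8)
The plan is to mimic the proof of Corollary~\ref{modular}, but starting from generators that lie in $\gamo(n)$ rather than $\pslz$. The key point is to identify, for each fixed $n$, a small family of two-generator subgroups $G_0, \ldots, G_m \subset \gamo(n)$ whose traces together realize all of $\tr(\gamo(n))$, and such that each $G_i$ satisfies the hypotheses of Lemma~\ref{infinitelemma}. Since $\gamo(n)$ contains the parabolic $\begin{pmatrix}1 & n \\ 0 & 1\end{pmatrix}$, we take this (or a power of it with $m$ replaced by a large enough multiple of $n$) as the first generator $g_1$ of each $G_i$, and for the second generator we take a lower-triangular (mod $n$) element $\begin{pmatrix}a & b \\ c & d\end{pmatrix} \in \gamo(n)$ with $c$ a multiple of $n$, chosen so that $\tr$ ranges over a set of residues (and small values) that, together with the traces of powers and products with $g_1$, exhausts $\tr(\gamo(n))$. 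First I would pin down $\tr(\gamo(n))$ explicitly: an element of $\gamo(n)$ has the form $\begin{pmatrix}a & b \\ cn & d\end{pmatrix}$, so its trace $a+d$ can be any integer, and moreover for each trace value one can exhibit a representative inside a suitable two-generator subgroup. (If $\tr(\gamo(n))$ turns out to be all of $\Z$, as for $\pslz$, then one only needs finitely many $G_i$ to cover it by the same argument of adding multiples of the parabolic translation length.)

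Next I would verify the inequalities of Lemma~\ref{infinitelemma} for each chosen $G_i = \langle g_1, g_2 \rangle$: with $g_1$ the parabolic of translation length $M$ (a large multiple of $n$) and $g_2 = \begin{pmatrix}a & b \\ cn & d\end{pmatrix}$, the conditions $\frac{|a+d|}{|cn|} < \frac{M}{2}$ and $M > \frac{4}{|cn|}$ are both satisfied once $M$ is chosen large enough relative to the finitely many fixed values of $a, d, c$. The lemma then supplies a finite-sided Ford fundamental domain $Q_i$ of infinite area for each $G_i$; as in Corollary~\ref{modular}, $\overline{Q_i}$ contains a common open interval of the boundary (one can arrange the isometric circles and the vertical sides so that some fixed subinterval of $\R$ lies in every $\overline{Q_i}$). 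Finally, I would invoke Theorem~\ref{general} with these $G_i$ and $Q_i$: its hypotheses ($\Gamma = \gamo(n)$ cofinite, the $G_i$ finitely generated of infinite index with $\bigcup \tr(G_i) = \tr(\gamo(n))$, each $Q_i$ finite-sided and connected) are exactly what we have arranged, so it produces the conjugating elements $\alpha_i \in \gamo(n)$ and the subgroup $H_n = \langle \alpha_i G_i \alpha_i^{-1} \rangle$ of infinite index with $\tr(H_n) = \tr(\gamo(n))$.

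The main obstacle I anticipate is the combinatorial bookkeeping in the first step: showing that a genuinely \emph{finite} collection of two-generator subgroups of $\gamo(n)$, each of the restricted shape required by Lemma~\ref{infinitelemma} (one parabolic generator fixing $\infty$, one lower-triangular-mod-$n$ generator whose own isometric circles form a Ford domain for the cyclic group they generate), suffices to cover $\tr(\gamo(n))$. Unlike $\pslz$, where the single relation between $S$ and $T$ makes it transparent that $\langle S, T^5\rangle$-type groups see every trace once one allows the parabolic to shift things by $5$, for $\gamo(n)$ one must be a little careful that conjugating a generator by powers of $g_1 = \begin{pmatrix}1 & M \\ 0 & 1\end{pmatrix}$ (which only changes $a-d$ and $b$, not $a+d$) does not shrink the trace set, and that the reduction-mod-$n$ constraint on the lower-left entry does not obstruct realizing any prescribed trace. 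Once the correct finite list of $G_i$ is written down, the remaining verifications are routine applications of Lemma~\ref{infinitelemma} and Theorem~\ref{general}.
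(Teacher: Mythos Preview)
Your overall strategy matches the paper's: find finitely many two-generator subgroups $G_i$ of $\gamo(n)$, each generated by a parabolic fixing $\infty$ together with one non-parabolic element, verify Lemma~\ref{infinitelemma} for each, and then invoke Theorem~\ref{general}. However, there are two concrete slips, one of which is exactly the ``combinatorial bookkeeping'' obstacle you flag.

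First, the assertion ``its trace $a+d$ can be any integer'' is false. For $\begin{pmatrix}a & b \\ cn & d\end{pmatrix}\in\gamo(n)$ the determinant condition forces $ad\equiv 1\pmod n$, so $d\equiv a^{-1}\pmod n$ and hence $a+d\equiv a+a^{-1}\pmod n$ with $a$ a unit. Thus $\tr(\gamo(n))$ is the union of only those residue classes mod $n$ of the form $\bar a+\bar a^{-1}$; for instance when $n=5$ the residues $1$ and $4$ never occur. This is precisely the missing idea: once you know the trace set is a finite union of arithmetic progressions with common difference $n$, the finite list of $G_i$ is forced on you --- one subgroup per admissible residue class, with second generator $\begin{pmatrix}a_i & b_i \\ n & d_i\end{pmatrix}$ where $a_id_i=1+b_in$.

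Second, $\gamo(n)$ only constrains the lower-left entry, so the translation $\begin{pmatrix}1&1\\0&1\end{pmatrix}$ already lies in $\gamo(n)$; there is no need to use $\begin{pmatrix}1&n\\0&1\end{pmatrix}$ or a large power. Taking $g_1$ with translation length $1$ is what makes the trace of $g_1^k g_2$ run through an entire residue class mod $n$ (shifting by $n$ each time, since $c=n$), and Lemma~\ref{infinitelemma} with $m=1$, $|c|=n$ then gives infinite index exactly when $n\ge 5$. The paper handles $n=2,3,4$ by hand with slightly larger translation lengths. Your proposal of taking $M$ a large multiple of $n$ would still work in principle, but it shifts traces by $Mn$ rather than $n$, multiplying the number of $G_i$ needed.
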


\begin{proof} Consider the maps
\[ \pslz \stackrel{\pi}{\longrightarrow} \mathrm{PSL}_2(\Z/n\Z) \stackrel{\tr}{\longrightarrow} \Z/n\Z, \]
where $\pi: \pslz \to \mathrm{PSL}_2(\Z/n\Z)$ denotes the natural projection where each entry is reduced modulo $n$, and $\tr$ is the trace map. For an element of $\gamo(n)$,
\[ \begin{pmatrix}a & b \\ c & d\end{pmatrix} \stackrel{\pi}{\longmapsto} \begin{pmatrix}\bar{a} & \bar{b} \\ 0 & \bar{a}^{-1}\end{pmatrix} \stackrel{\tr}{\longmapsto} \bar{a} + \bar{a}^{-1}, \] 
and thus the set $\{ \bar{a} + \bar{a}^{-1} \mid \bar{a} \in \Z/n\Z \mbox{ has a multiplicative inverse} \}$ contains the image of $\gamo(n)$ under the composition $\tr \circ \pi$. Let $\mathcal{S}_n$ denote the preimage in $\Z$ of this set under the standard projection $\Z \to \Z/n\Z$. We claim that $\tr(\gamo(n)) = \mathcal{S}_n$. To see this, let $a \in \{ 1, \ldots, n-1 \}$ be coprime to $n$, and let $d\in \{ 1, \ldots, n-1 \}$ be such that $ad \equiv 1$ mod $n$. Then $ad = 1 + bn$ for some integer $b$, and so the matrix
\[ \begin{pmatrix}a & b \\ n & d\end{pmatrix} \in \gamo(n) \]
has trace $a+d$. Furthermore, the matrices $ \left( \begin{smallmatrix}1 & 1 \\ 0 & 1\end{smallmatrix} \right)^m \left( \begin{smallmatrix}a & b \\ n & d\end{smallmatrix} \right) = \left( \begin{smallmatrix}a+mn & b+md \\ n & d\end{smallmatrix} \right)$ ensure that all integers of the form $(a+d)+mn$, $m \in \Z$, appear as traces of elements of $\gamo(n)$. 

When $n \geq 5$, we generate the subgroup $H_n$ as follows. Let $\{ a_i \} \subset \{ 1, \ldots , n-1 \}$ be a complete set of residue classes coprime to $n$, and for each $i$, let $d_i \in \{ 1, \ldots , n-1 \}$ be such that $a_id_i \equiv 1$ mod $n$; that is, $a_id_i = 1 +b_i n$ for some $b_i \in \Z$. By Lemma \ref{infinitelemma}, since $n \geq 5$, the subgroups $G_i = \left\langle \left( \begin{smallmatrix} 1 & 1 \\ 0 & 1 \end{smallmatrix} \right) , \left( \begin{smallmatrix} a_i & b_i \\ n & d_i \end{smallmatrix} \right)  \right\rangle$ are of infinite index in $\gamo(n)$ for each $i$. 

For $n = 2, 3, 4$ the above method does not generate infinite index subgroups, so we treat these cases individually. For $n=2$, we take the subgroups
\[ G_1 = \left\langle \begin{pmatrix} 1 & 3 \\ 0 & 1 \end{pmatrix} , \begin{pmatrix} 1 & 0 \\ 2 & 1 \end{pmatrix}   \right\rangle, G_2 = \left\langle \begin{pmatrix} 1 & 3 \\ 0 & 1 \end{pmatrix} , \begin{pmatrix} 1 & -1 \\ 2 & -1 \end{pmatrix}   \right\rangle. \]
For $n=3$, we take the subgroups $G_1 = \left\langle \left( \begin{smallmatrix} 1 & 2 \\ 0 & 1 \end{smallmatrix} \right) , \left( \begin{smallmatrix} 1 & 0 \\ 3 & 1 \end{smallmatrix} \right)   \right\rangle, G_2 = \left\langle \left( \begin{smallmatrix} 1 & 2 \\ 0 & 1 \end{smallmatrix} \right) , \left( \begin{smallmatrix} 2 & -1 \\ 3 & -1 \end{smallmatrix} \right)  \right\rangle$. 
For $n=4$, the subgroup $G_1 = \left\langle \left( \begin{smallmatrix} 1 & 2 \\ 0 & 1 \end{smallmatrix} \right) , \left( \begin{smallmatrix} 1 & 0 \\ 4 & 1 \end{smallmatrix} \right)  \right\rangle$ generates all the required traces, and has infinite index in $\gamo(4)$. This treats all cases, and we are done.\end{proof}

\begin{cor}For $p$ prime, the maximal arithmetic Fuchsian group $N(\gamo(p))$ has infinitely many finite index subgroups with the same trace set at itself.\end{cor}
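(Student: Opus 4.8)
The plan is to derive the corollary from Theorem \ref{general} together with subgroup separability, just as in Corollaries \ref{modular} and \ref{gammazero}. I would begin by recalling the structure of the ambient group: for $p$ prime one has $N(\gamo(p)) = \left\langle \gamo(p), w_p \right\rangle$, where $w_p : z \mapsto -1/(pz)$ is the Fricke involution, and $\gamo(p)$ has index $2$, so that $N(\gamo(p)) = \gamo(p) \cup \gamo(p) w_p$. Representing $w_p$ by $\frac{1}{\sqrt{p}}\begin{pmatrix} 0 & -1 \\ p & 0 \end{pmatrix}$ and multiplying out, an element of the nontrivial coset has the form $\begin{pmatrix} a & b \\ cp & d \end{pmatrix} w_p$ with trace $(b-c)\sqrt{p}$; taking $c = 0$, $a = d = 1$, and $b \in \Z$ arbitrary shows that every integer multiple of $\sqrt{p}$ is realised. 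Since the elements of $\gamo(p)$ contribute exactly the set $\cS_p \subset \Z$ identified in the proof of Corollary \ref{gammazero}, this gives
\[ \tr(N(\gamo(p))) = \cS_p \cup \sqrt{p}\,\Z, \]
the two pieces meeting only in $0$. I would also note that $N(\gamo(p))$, being commensurable with $\pslz$, contains a free group of finite index and is therefore LERF.

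The substantive step is to produce a finite family of finitely generated, infinite-index subgroups of $N(\gamo(p))$, each with a finite-sided connected fundamental domain, whose trace sets together exhaust $\tr(N(\gamo(p)))$; Theorem \ref{general} then supplies a finitely generated, infinite-index $H$ with $\tr(H) = \tr(N(\gamo(p)))$, and the LERF property converts this into an infinite descending chain of finite-index subgroups, each necessarily having the same trace set. For the $\cS_p$ part of the trace set I would reuse the finite collection of subgroups of $\gamo(p)$ constructed in the proof of Corollary \ref{gammazero}: they are of infinite index in $\gamo(p)$, hence in $N(\gamo(p))$, and carry finite-sided Ford domains. For the remaining traces $\sqrt{p}\,\Z$ I would adjoin the three subgroups
\[ G_j = \left\langle \begin{pmatrix} 1 & 3 \\ 0 & 1 \end{pmatrix}, \; w_p\begin{pmatrix} 1 & j \\ 0 & 1 \end{pmatrix} \right\rangle, \qquad j \in \{-1, 0, 1\}. \]
Here $w_p\begin{pmatrix} 1 & j \\ 0 & 1 \end{pmatrix}$ has trace $j\sqrt{p}$, and $\begin{pmatrix} 1 & 3 \\ 0 & 1 \end{pmatrix}^m w_p \begin{pmatrix} 1 & j \\ 0 & 1 \end{pmatrix}$ has trace $(3m+j)\sqrt{p}$, so $\tr(G_{-1}) \cup \tr(G_0) \cup \tr(G_1)$ sweeps through all residues modulo $3$ and hence contains $\sqrt{p}\,\Z$. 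For $p \geq 5$ the generator $w_p\begin{pmatrix} 1 & j \\ 0 & 1 \end{pmatrix}$ is an order-$2$ elliptic (when $j = 0$) or hyperbolic (when $j = \pm 1$) element, and since $3 > 4/\sqrt{p}$, each $G_j$ satisfies the hypotheses of Lemma \ref{infinitelemma}, giving a finite-sided Ford domain of infinite area and hence infinite index in $N(\gamo(p))$.

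The one place I expect genuine care to be needed is $p \in \{2,3\}$: there $w_p\begin{pmatrix} 1 & \pm 1 \\ 0 & 1 \end{pmatrix}$ has trace $\pm\sqrt{p} \in (-2,2)$, so it is elliptic of finite order ($4$ if $p = 2$, $6$ if $p = 3$), and Lemma \ref{infinitelemma} does not apply verbatim. I would instead invoke the Klein--Maskit Combination Theorem \ref{combthm} directly: the cyclic group generated by this elliptic element is finite, so it has only finitely many isometric circles, all of small radius, and a width-$3$ strip centred appropriately contains all of them; the strip and the Ford domain of the finite cyclic group then satisfy the hypotheses of Theorem \ref{combthm}, exhibiting $G_{\pm 1}$ as a free product $\Z \ast \Z/n\Z$ with a finite-sided fundamental domain of infinite area, exactly in the spirit of the individual treatment of $n = 2,3,4$ in Corollary \ref{gammazero}. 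Once this family is assembled, Theorem \ref{general} and the LERF property finish the argument as above.
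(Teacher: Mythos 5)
Your proposal is correct and follows essentially the same route as the paper: reuse the infinite-index subgroups of $\gamo(n)$ from Corollary \ref{gammazero} for the rational integer traces, adjoin subgroups generated by a translation together with Fricke-type elements to sweep out $\sqrt{p}\,\Z$, verify infinite index via Lemma \ref{infinitelemma} (with special care for $p=2,3$, where the extra generator is elliptic of order $4$ or $6$), and conclude with Theorem \ref{general} and LERF. The only cosmetic difference is that the paper uses a single subgroup $\left\langle w_p, \left(\begin{smallmatrix}1&1\\0&1\end{smallmatrix}\right)\right\rangle$ (translation length $1$) for $p\geq 5$ and two translation-length-$3$ subgroups for $p=2,3$, whereas you use three translation-length-$3$ subgroups uniformly.
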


\begin{proof} It is known (see Helling \cite{Helling1, Helling2} and Maclachlan \cite{Maclachlan}) that elements of $N(\gamo(p))$ either belong to $\gamo(p)$ or have the form
\[ \begin{pmatrix}a\sqrt{p} & \frac{b}{\sqrt{p}} \\ c\sqrt{p} & d\sqrt{p}\end{pmatrix},\]
where $a,b,c,d \in \Z$ and the determinant is $1$. To obtain all rational integer traces, we use the same collection of infinite index subgroups which were used in Corollary \ref{gammazero}; when $p \geq 5$, we add to this collection the subgroup
\[ \left\langle \begin{pmatrix}0 & \frac{-1}{\sqrt{p}} \\ \sqrt{p} & 0\end{pmatrix}, \begin{pmatrix}1 & 1 \\ 0 & 1\end{pmatrix} \right\rangle, \]
which has a Ford domain of infinite area, and generates all traces of the form $m\sqrt{p}$ for $m \in \Z$. When $p=2$, we add the two subgroups $ \left\langle \left( \begin{smallmatrix}0 & \frac{-1}{\sqrt{2}} \\ \sqrt{2} & 0\end{smallmatrix} \right), \left( \begin{smallmatrix}1 & 3 \\ 0 & 1\end{smallmatrix} \right) \right\rangle, \left\langle \left( \begin{smallmatrix}\sqrt{2} & \frac{-1}{\sqrt{2}} \\ \sqrt{2} & 0\end{smallmatrix} \right) , \left( \begin{smallmatrix}1 & 3 \\ 0 & 1\end{smallmatrix} \right) \right\rangle$,
and when $p=3$, we add the subgroups $\left\langle \left( \begin{smallmatrix}0 & \frac{-1}{\sqrt{3}} \\ \sqrt{3} & 0\end{smallmatrix} \right), \left( \begin{smallmatrix}1 & 3 \\ 0 & 1\end{smallmatrix} \right) \right\rangle, \left\langle \left( \begin{smallmatrix}\sqrt{3} & \frac{-1}{\sqrt{3}} \\ \sqrt{3} & 0\end{smallmatrix} \right) , \left( \begin{smallmatrix}1 & 3 \\ 0 & 1\end{smallmatrix} \right) \right\rangle$.\end{proof}
 
We remark that the next result is closely related to a theorem of Schmutz \cite{Schmutz}. In particular, Theorem 3 of \cite{Schmutz} gives infinitely many non-isometric surfaces with the same trace set as certain principal congruence subgroups. Our result shows that this holds for every principal congruence subgroup.

\begin{cor}For each $2 \leq n \in \N$, the principal congruence subgroup $\Gamma(n) < \pslz$ admits a finitely generated, infinite index subgroup $H'_n < \Gamma(n)$ such that $\tr(H'_n) = \tr(\Gamma(n))$. \end{cor}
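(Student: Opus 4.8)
The plan is to follow the template established in Corollary~\ref{gammazero}, applying Theorem~\ref{general} to $\Gamma = \Gamma(n)$. First I would compute the trace set $\tr(\Gamma(n))$ explicitly: since every element of $\Gamma(n)$ is congruent to the identity modulo $n$, its trace is congruent to $2$ modulo $n$, so $\tr(\Gamma(n)) \subseteq \{ k \in \Z \mid k \equiv 2 \pmod n \}$ (up to sign). Conversely, I would exhibit, for each residue, an element realizing it: the matrices
\[ \begin{pmatrix} 1+an & bn \\ n & 1+dn \end{pmatrix} \]
with appropriate integer entries, together with left-multiplication by powers of $\begin{pmatrix} 1 & n \\ 0 & 1 \end{pmatrix}$, produce all integers $\equiv 2 \pmod n$ as traces, so equality holds. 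The slight wrinkle compared to $\gamo(n)$ is that $\Gamma(n)$ is generated by parabolics translating by $n$ rather than by $1$, so the relevant "width" for Lemma~\ref{infinitelemma} is $m = n$ (or a multiple thereof if needed), and I must check the hypotheses $\frac{|a+d|}{c} < \frac{|m|}{2}$ and $|m| > \frac{4}{|c|}$ remain satisfiable.

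Next I would construct the finitely generated, infinite index subgroups $G_i$. For each pair $(a,d)$ with $a d \equiv 1 \pmod{n^2}$ (or more carefully, chosen so that $\begin{pmatrix} a & b \\ n & d\end{pmatrix} \in \Gamma(n)$, which forces $a \equiv d \equiv 1 \pmod n$), set
\[ G_i = \left\langle \begin{pmatrix} 1 & n \\ 0 & 1 \end{pmatrix}, \begin{pmatrix} a_i & b_i \\ n & d_i \end{pmatrix} \right\rangle. \]
Here $c = n$, so $\frac{4}{|c|} = \frac{4}{n} \leq 2 \leq n$ whenever $n \geq 2$, giving $|m| = n > \frac{4}{|c|}$ for $n \geq 3$; for $n = 2$ one instead takes the parabolic to have off-diagonal entry a larger multiple of $2$ (for instance $\begin{pmatrix} 1 & 4 \\ 0 & 1\end{pmatrix}$ or $\begin{pmatrix} 1 & 6 \\ 0 & 1 \end{pmatrix}$, which still lies in $\Gamma(2)$) so that the inequality $|m| > 2$ holds, exactly the kind of individual treatment of small $n$ used in Corollary~\ref{gammazero}. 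By Lemma~\ref{infinitelemma} each $G_i$ has a finite-sided Ford fundamental domain $Q_i$ of infinite area, and by construction $\bigcup_i \tr(G_i) \supseteq \tr(\Gamma(n))$ since the first generators contribute the trace $2$ and the products above realize every residue. One should also verify that the closures $\overline{Q_i}$ share a common open interval on $\R$ so that the conjugating elements $\alpha_i$ of Theorem~\ref{general} can be chosen with disjoint isometric circles contained in the respective domains; this is automatic from the form of the Ford domains, whose boundary intervals can be arranged to overlap by translating the defining vertical geodesics.

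With the $G_i$ and $Q_i$ in hand, Theorem~\ref{general} directly supplies $\alpha_i \in \Gamma(n)$ and the subgroup $H'_n = \langle \alpha_i G_i \alpha_i^{-1} \rangle$ of infinite index with $\tr(H'_n) = \tr(\Gamma(n))$, which is the claim. The main obstacle I anticipate is the small-$n$ bookkeeping: verifying that for $n = 2$ (and possibly $n = 3, 4$) one can still choose parabolic widths that are multiples of $n$, lie in $\Gamma(n)$, satisfy the metric inequalities of Lemma~\ref{infinitelemma}, and collectively generate all traces $\equiv 2 \pmod n$ — together with confirming that the $\alpha_i$ themselves lie in $\Gamma(n)$ (not merely in $\pslz$), which is needed for $H'_n$ to be a subgroup of $\Gamma(n)$ rather than just of $\pslz$. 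Since $\Gamma(n)$ is itself cofinite and contains hyperbolic elements with axis endpoints in any prescribed interval, the construction of suitable $\alpha_i \in \Gamma(n)$ goes through exactly as in the proof of Theorem~\ref{general}.
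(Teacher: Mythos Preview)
Your computation of $\tr(\Gamma(n))$ is incorrect, and this is the main gap. You claim the trace set is $\{k \in \Z : k \equiv 2 \pmod n\}$ up to sign, but in fact it is $\{k : k \equiv 2 \pmod{n^2}\}$. Writing a general element as $\begin{pmatrix} 1+an & bn \\ cn & 1+dn \end{pmatrix}$, the determinant condition $1 + (a+d)n + (ad-bc)n^2 = 1$ forces $a+d$ to be a multiple of $n$, so the trace $2 + (a+d)n$ lies in $2 + n^2\Z$, not merely $2 + n\Z$. Your proposed ``converse'' step therefore fails: for $k \equiv 2 \pmod n$ with $k \not\equiv 2 \pmod{n^2}$ there is no element of $\Gamma(n)$ of trace $k$. (Your own parenthetical, that membership in $\Gamma(n)$ forces $a \equiv d \equiv 1 \pmod n$ and hence $b \equiv 0 \pmod n$ in your matrix $\begin{pmatrix} a & b \\ n & d \end{pmatrix}$, already encodes this stronger constraint, but you did not carry it through.)

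Once the trace set is corrected the argument becomes much simpler, and Theorem~\ref{general} is not needed at all. Every trace $2 + An^2$ is realized by
\[ \begin{pmatrix} 1 & An \\ 0 & 1 \end{pmatrix}\begin{pmatrix} 1 & 0 \\ n & 1 \end{pmatrix} = \begin{pmatrix} 1+An^2 & An \\ n & 1 \end{pmatrix}, \]
so the single two-generator subgroup $\left\langle \begin{pmatrix} 1 & n \\ 0 & 1 \end{pmatrix}, \begin{pmatrix} 1 & 0 \\ n & 1 \end{pmatrix} \right\rangle$ already has the full trace set of $\Gamma(n)$. By Lemma~\ref{infinitelemma} this subgroup has infinite index for $n \geq 3$; for $n = 2$ one replaces the first generator by $\begin{pmatrix} 1 & 4 \\ 0 & 1 \end{pmatrix}$, exactly as you anticipated. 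No family of $G_i$, no conjugating elements $\alpha_i$, and no combination step are required.
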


\begin{proof} An element of $\Gamma(n)$ can be given the form
\[ \begin{pmatrix} 1+an & bn \\ cn & 1+dn\end{pmatrix} \]
for integers $a,b,c,d$. Thus traces of such elements have the form $2+(a+d)n$. The determinant is
\[ 1 + (a+d)n +adn^2 - bcn^2 = 1, \]
from which we deduce that $(a+d)n$ is an integer multiple of $n^2$; this implies that $(a+d)$ is a multiple of $n$. Hence, the trace set $\tr(\Gamma(n))$ contains only elements of the form $2 \pm An^2$, for $A \in \Z$. Moreover, for each integer $A$, the matrix
\[ \begin{pmatrix} 1 & An \\ 0 & 1\end{pmatrix} \begin{pmatrix} 1 & 0 \\ n & 1\end{pmatrix}  = \begin{pmatrix} 1+An^2 & An \\ n & 1\end{pmatrix} \]
realizes this trace. Thus $\tr(\Gamma(n)) = \{ An^2 \pm 2 \mid A \in \N_0 \}$. These traces can all be obtained from the subgroup $\left\langle \left( \begin{smallmatrix} 1 & n \\ 0 & 1\end{smallmatrix} \right), \left( \begin{smallmatrix} 1 & 0 \\ n & 1\end{smallmatrix} \right) \right\rangle$, and by Lemma \ref{infinitelemma}, we find that when $n>2$, this subgroup is of infinite index. We take $H'_n$ to be this subgroup. When $n=2$, we take $H'_2 = \left\langle \left( \begin{smallmatrix} 1 & 4 \\ 0 & 1\end{smallmatrix} \right), \left( \begin{smallmatrix} 1 & 0 \\ 2 & 1\end{smallmatrix} \right) \right\rangle$; this subgroup has the required trace set and is of infinite index. \end{proof}

These results also raise interesting questions. One defines a group commensurable with $\pslz$ to be a \emph{congruence subgroup} if it contains a principal congruence subgroup $\Gamma(n)$ for some $n$. In the light of the above results, it is natural to ask:

\noindent {\bf Question.} Does every congruence subgroup commensurable with $\pslz$ admit an infinite index subgroup with the same trace set as itself?

\noindent The above results also rely on the congruences, and related trace information, which are induced by being a congruence subgroup. Since there exist Fuchsian groups commensurable with $\pslz$ which are not congruence subgroups, it is also pertinent to ask whether there exist non-congruence subgroups with this property.

\noindent {\bf Question.} What can we say about non-congruence subgroups commensurable with $\pslz$?

\section{Examples}\label{examples}

In this section, we construct explicit examples of finite index subgroups of $\pslz$ with trace set $\N_0$, as in Corollary \ref{modular}. We will refer to such as \emph{full trace subgroups}. We show that for each $n \in \N$ such that $n \geq 31$, there is a full trace subgroup $G_n$ of $\pslz$ with index $\left[ \pslz : G_n \right]=n$. By considering prime values of $n$, it follows from this that there exist infinitely many maximal proper subgroups of $\pslz$ with full trace. In particular, this shows that the infinitely many subgroups guaranteed by Corollary \ref{modular} cannot be grouped as a single tower of nested subgroups. We also give an alternative construction of full trace finite index subgroups of $\pslz$, the flexibility of which allows for the construction of examples with arbitrarily high multiplicities for finitely many traces.

\noindent {\bf Definition.} Throughout this section we will make use of certain involutions which belong to $\pslz$. For $k \in \Z$, we define by $t_k$ the element

\[ t_k = \begin{pmatrix} 2k+1 & -2k^2-2k-1 \\ 2 & -(2k+1)\end{pmatrix}. \]

Each element $t_k$ has the properties that it fixes the point $\frac{2k+1}{2}+\frac{1}{2}i$, exchanges the points $k$ and $k+1$ on the boundary of $\Hyp$, and has isometric circle the geodesic with endpoints at $k$ and $k+1$. Note that this is an edge of the Farey tessellation.

The following Lemma will allow us to increment the index of certain subgroups by exploiting the Farey tessellation.

\begin{lemma}\label{fareylemma}Suppose that $\Gamma < \pslz$ is a finite index subgroup, of index $n$, with convex fundamental domain $P$ such that one side $s$ of $P$ is an edge of the Farey tessellation, and the two ideal vertices of $s$ are vertices of $P$. Suppose further that the involution $\alpha_s \in \pslz$, with fixed point on $s$, belongs to $\Gamma$. Let $T$ denote the Farey triangle adjacent to $s$ and exterior to $P$. Then there exist
\begin{itemize}\item a subgroup $\Gamma' < \pslz$ of index $n+1$, obtained by replacing $\alpha_s$ with the order 3 rotation of $\pslz$ with fixed point inside $T$; and
\item a subgroup $\Gamma'' < \pslz$ of index $n+3$, obtained by replacing $\alpha_s$ with the two involutions with fixed points in the two other sides of $T$.
\end{itemize} \end{lemma}

\begin{proof}We recall the facts that $\pslz$ acts transitively on the triangles, and the edges, of the Farey tessellation, and that corresponding to each edge $s$ there is a unique involution $\alpha_s$ whose fixed point lies on $e$. Therefore, by conjugation if necessary, we may assume that the Farey edge $s$ is the edge between $0$ and $1$, and that $P$ lies exterior to $s$ (see Figure \ref{farey01}). The involution with fixed point on $s$ is then 
\[ \alpha_s = \begin{pmatrix}1 & -1 \\ 2 & -1\end{pmatrix}, \]
\begin{figure}[htb]
\begin{center}
\includegraphics[scale=0.6]{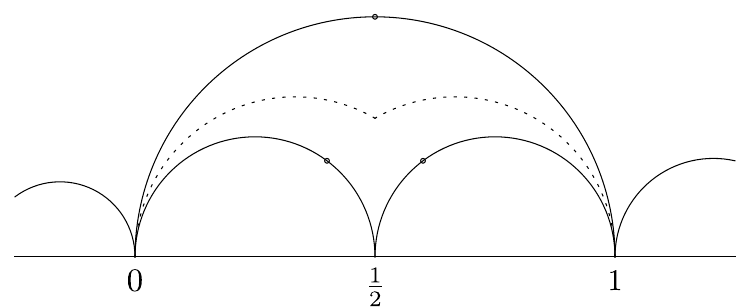}
\caption{We may replace $\alpha_s$ with one order 3 rotation or two involutions}
\label{farey01}
\end{center}
\end{figure}
and the vertices of $T$ lie at $0, \frac{1}{2},$ and $1$. The presentation for $\Gamma$ given by the Poincar\'{e} Polyhedron Theorem then has $\alpha_s$ as a generating side-pairing. The relators in which $\alpha_s$ appears are the relator $\alpha_s^2$, corresponding to the vertex cycle of $\frac{1}{2} + \frac{i}{2}$, and the relator corresponding to the vertex cycle containing $0$ and $1$, in which $\alpha_s$ appears exactly once. As such, we may replace $\alpha_s$ with the order $3$ element
\[ \tau_s = \begin{pmatrix} 2 & -1 \\ 3 & -1\end{pmatrix} \]
which has fixed point $1/2 + \sqrt{-3}/6 \in T$ and which sends $0$ to $1$. The resulting group, $\Gamma'$, has a fundamental domain which consists of the union of $P$ with the third of $T$ exterior to the isometric circles of $\tau_s$. The presentation given by Poincar\'{e} has $\tau_s^3$ in place of $\alpha_s^2$, and $\tau_s$ appears in place of $\alpha_s$ within the other relator where $\alpha_s$ appeared. The new fundamental domain has area $\frac{\pi}{3}$ greater than the area of $P$; hence, the index $\left[ \pslz : \Gamma' \right] = n+1$.

Alternatively, we may also replace $\alpha_s$ with the two involutions
\[ \beta_1 = \begin{pmatrix} 2 & -1 \\ 5 & -2 \end{pmatrix} \ \mbox{and} \ \beta_2 = \begin{pmatrix} 3 & -2 \\ 5 & -3 \end{pmatrix}, \]
whose fixed points lie on the other two sides of $T$. Since $\beta_1(0) = \frac{1}{2}$ and $\beta_2 \left( \frac{1}{2} \right) = 1$, the Poincar\'{e} presentation of the resulting group $\Gamma''$ has the two relators $\beta_1^2$ and $\beta_2^2$ in place of $\alpha_2^2$, and the product $\beta_1 \beta_2$ in place of $\alpha_2$ in the other relator. The union $P \cup T$ is a fundamental domain for $\Gamma''$ corresponding to this presentation. The area of this domain is $\pi$ larger than the area of $P$; hence, the index $\left[ \pslz : \Gamma' \right] = n+3$.\end{proof}

\noindent {\bf Remark.} We believe that Lemma \ref{fareylemma} should be an example of a more general phenomenon that occurs within residually finite groups. The subgroups $G_n$ all share a common infinite index subgroup $H$; the process described in the proof repeatedly ``removes" generators to obtain $H$, and then ``replaces" them with others in order that the index be augmented as required. As such, Theorem \ref{fareythm} below merely exploits the fact that for a certain infinite index subgroup $H$ of $\pslz$, the set of finite index subgroups which contain $H$ includes subgroups of all indices at least 31. It may be of interest to ask how generally this property of $\pslz$, and of $H$, occurs.

\noindent {\bf Remark.} We will refer to the procedure described in Lemma \ref{fareylemma} as \emph{Farey replacement} of Type 1 and Type 2 respectively. Notice that Farey replacement of Type 2 can be repeatedly performed, as the two new sides of the fundamental domain are again edges of the Farey tessellation. 

\begin{theorem}\label{fareythm}For each $n \geq 31$, there is a full trace subgroup $G_n < \pslz$ of index $n$. \end{theorem}

\begin{proof}The subgroup $G_{31}$ is constructed as follows. Begin with the infinite index subgroup generated by the elements $\left( \begin{smallmatrix} 1 & 5 \\ 0 & 1\end{smallmatrix} \right)$ and $\left( \begin{smallmatrix} 3 & -4 \\ 1 & -1\end{smallmatrix} \right)$, with fundamental domain shown in Figure \ref{farey11}. Conjugate this by $t_4$.
\begin{figure}[htb]
\begin{center}
\includegraphics[scale=0.8]{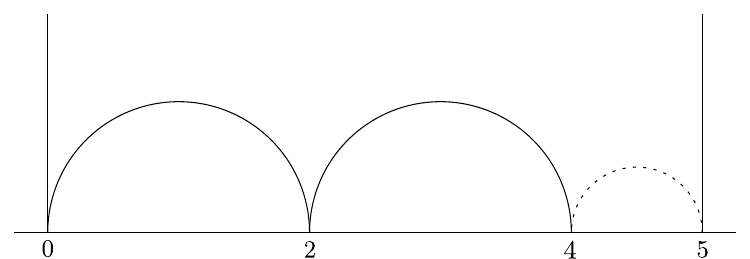}
\caption{Ford domain for first infinite index subgroup}
\label{farey11}
\end{center}
\end{figure}
Next, take the subgroup generated by $\left( \begin{smallmatrix} 1 & 5 \\ 0 & 1\end{smallmatrix} \right)$, $\left( \begin{smallmatrix} 2 & -3 \\ 1 & -1\end{smallmatrix} \right)$, and $t_4$, with fundamental domain shown in Figure \ref{farey12}. Conjugate this by $t_3$. 
\begin{figure}[htb]
\begin{center}
\includegraphics[scale=0.8]{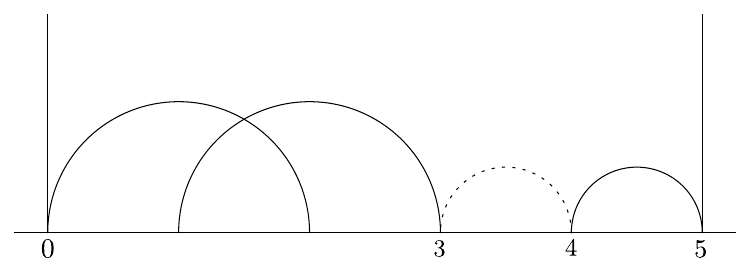}
\caption{The edge from $4$ to $5$ is a Farey edge}
\label{farey12}
\end{center}
\end{figure}
Finally, include the generators $\left( \begin{smallmatrix} 1 & 5 \\ 0 & 1\end{smallmatrix} \right)$, $\left( \begin{smallmatrix} 1 & -2 \\ 1 & -1\end{smallmatrix} \right)$, and $t_2$. The final fundamental domain $P$ for $H_{31}$, which satisfies the Poincar\'{e} Polyhedron Theorem, is shown in Figure \ref{farey13}. This has area $\frac{31}{3}\pi$, and hence index $31$.
\begin{figure}[htb]
\begin{center}
\includegraphics[scale=0.8]{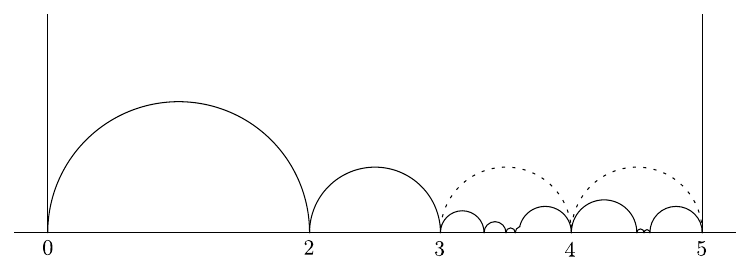}
\caption{Fundamental domain for $G_{31}$; the edge from $2$ to $3$ is a Farey edge}
\label{farey13}
\end{center}
\end{figure}
We observe that the elements $t_2$ and $t_4$ have isometric circles (and sides of $P$) which are edges of the Farey tessellation. By Lemma \ref{fareylemma}, we may perform a Farey replacement of Type 1 on $t_2$ in order to obtain an index $32$ subgroup $G_{32}$. We may perform another Farey replacement of Type 1 on $t_4$ (or, more precisely, on its conjugate which is a generator of $G_{31}$ and $G_{32}$) to obtain an index 33 subgroup $G_{33}$. Starting again with $G_{31}$, we may perform a Farey replacement of Type 2 on $t_2$ to obtain an index $34$ subgroup $G_{34}$. This last replacement produced two new sides on which Farey replacement may be performed; performing one Type 1 replacement gives $G_{35}$, and performing two gives $G_{36}$. 

In general, to obtain a full trace subgroup $G_n$ of index $n > 34$, proceed as follows: if $n \equiv 31$ mod $3$, then perform the required number $\frac{n-31}{3}$ of Type 2 Farey replacements. If $n \equiv 32$ mod $3$, then perform the required number $\frac{n-32}{3}$ of Type 2 Farey replacements, and then perform a single Type 1 Farey replacement. If $n \equiv 33$ mod $3$, then perform the required number $\frac{n-33}{3}$ of Type 2 Farey replacements, and then perform two Type 1 Farey replacements.  \end{proof}

\noindent {\bf Remark.} Using a construction closely related to that used in Theorem \ref{fareythm}, we are able to find a full trace subgroup $G_{25}$ of $\pslz$ with index $25$. Thus far, this is the minimal index in $\pslz$ of known full trace subgroups. However, this subgroup does not possess the flexibility of the subgroup $G_{31}$ that allows us to increment the index. This subgroup is constructed as follows. Conjugate the group generated by $ \left( \begin{smallmatrix} 1 & 5 \\ 0 & 1\end{smallmatrix} \right)$ and $\left( \begin{smallmatrix} 2 & -3 \\ 1 & -1\end{smallmatrix} \right)$ by $\left( \begin{smallmatrix}4 & -17 \\ 1 & -4\end{smallmatrix} \right)$, the group generated by $\left( \begin{smallmatrix} 1 & 5 \\ 0 & 1\end{smallmatrix} \right)$ and $\left( \begin{smallmatrix} 4 & -5 \\ 1 & -1\end{smallmatrix} \right)$ by $t_3$, and then include the generators $\left( \begin{smallmatrix} 1 & 5 \\ 0 & 1\end{smallmatrix} \right)$ and $\left( \begin{smallmatrix} 1 & -2 \\ 1 & -1\end{smallmatrix} \right)$. Note that the resulting fundamental domain has no sides which are Farey edges where the associated involution belongs to the group. The only involution present could be replaced by two more in the manner of Lemma \ref{fareylemma}, but this would remove some traces from the subgroup.

\noindent {\bf Question.} Is there a proper subgroup of $\pslz$ of full trace with index less than $25$? What is the minimal such index?

With the aid of the Magma computer algebra system \cite{magma}, along with an algorithm to determine representatives for all conjugacy classes of a fixed trace such as that given by Fine \cite{Fine}, it is possible to check that there is no full trace subgroup of index less than $14$. As such, the minimal index of such a subgroup must be at least $14$ and at most $25$. This search found an index $13$ subgroup, generated by $\left( \begin{smallmatrix}1 & 8 \\ 0 & 1\end{smallmatrix} \right), \left( \begin{smallmatrix}0 & -1 \\ 1 & 0\end{smallmatrix} \right) , \left( \begin{smallmatrix}1 & -2 \\ 1 & -1\end{smallmatrix} \right) , \left( \begin{smallmatrix}-3 & -10 \\ 1 & 3\end{smallmatrix} \right)$, and $\left( \begin{smallmatrix}13 & 29 \\ 4 & 9\end{smallmatrix}\right)$, which has a trace set containing each integer, with the sole exception of $45$, between $0$ and $100$. We expect that an enumeration of subgroups of index from 14 up to 24 would produce a number of examples similar to this, where the trace set verifiably includes each integer up to some $N$, but one cannot easily find infinite index subgroups of the form of Theorem \ref{general}. As $N$ increases, it becomes increasingly difficult to determine whether or not these subgroups are full trace subgroups.

We now give an alternative construction of full trace subgroups of $\pslz$.

\noindent {\bf Definition.} Given an interval $[a,b] \subset \R$ with $a,b \in \Z$, the operation of \emph{filling} the interval $[a,b]$ will mean including in a subgroup the involutions $t_a, t_{a+1}, \ldots , t_{b-1}$. The isometric circles of these involutions cover the interval $[a,b]$. In case $a=b$, do nothing.

\noindent {\bf Examples.} For each integer $n>5$, we construct a distinct subgroup $H_n$, primarily based around infinite index subgroups containing the parabolic element $\left( \begin{smallmatrix}1 & n \\ 0 & 1\end{smallmatrix} \right)$. The number of infinite index subgroups needed depends on $n$, since each subgroup is designed to generate an arithmetic progression of traces with common difference $n$. 

Begin by setting $j_0 = n-1$, and $i_0 = \left\lfloor \dfrac{n}{2} \right\rfloor$. Define $H^n_{i_0}$ as being generated by the elements
\[ \begin{pmatrix}1 & n \\ 0 & 1\end{pmatrix}, \begin{pmatrix}i_0+1 & -i_0-2 \\ 1 & -1\end{pmatrix}, \]
and those involutions required to fill the intervals $[2,i_0]$ and $[i_0+2,j_0]$. Take as a fundamental domain the Ford domain, between $0$ and $n$. Conjugate this subgroup by $t_{j_0}$. The resulting conjugated fundamental domain will have endpoints at $j_0$ and $j_0+1$, and will cover the interval between them. 

\begin{figure}[htb]
\begin{center}
\includegraphics[scale=0.8]{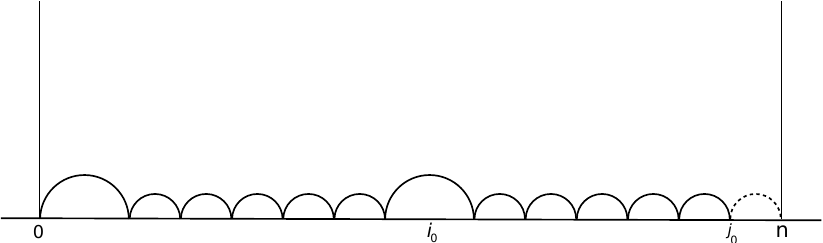}
\caption{First fundamental domain before conjugation by $t_{j_0}$}
\label{farey02}
\end{center}
\end{figure}

Let $i_1 = i_0-1$ and $j_1=j_0-1$. Define $H^n_{i_1}$ as being generated by the elements
\[ \begin{pmatrix}1 & n \\ 0 & 1\end{pmatrix}, \begin{pmatrix}i_1+1 & -i_1-2 \\ 1 & -1\end{pmatrix}, \]
and those involutions required to fill the intervals $[2,i_1]$, $[i_1+2,j_1]$, and $[j_1+1,n]$. Take as a fundamental domain the Ford domain, between $0$ and $n$. Conjugate this subgroup by $t_{j_1}$. The resulting conjugated fundamental domain will have endpoints at $j_1$ and $j_1+1$, and will cover the interval between them.

Proceed iteratively as follows. For each $i_k>1$, define $H^n_{i_k}$ as being generated by the elements 
\[ \begin{pmatrix}1 & n \\ 0 & 1\end{pmatrix}  , \begin{pmatrix}i_k+1 & -i_k-2 \\ 1 & -1\end{pmatrix}, \]
and those involutions required to fill the intervals $[2,i_k]$, $[i_k+2,j_k]$ and $[j_k+1,n]$. Take as a fundamental domain the Ford domain, between $0$ and $n$. Conjugate this subgroup by $t_{j_k}$. Let $i_{k+1}=i_k-1$ and $j_{k+1}= j_k-1$. When $i_k=1$, define $H^n_{1}$ as being generated by the elements $ \left( \begin{smallmatrix}1 & n \\ 0 & 1\end{smallmatrix} \right) , \left( \begin{smallmatrix}2 & -3 \\ 1 & -1\end{smallmatrix} \right)$, and those involutions required to fill the intervals $[3,j_k]$ and $[j_k+1,n]$. Take as a fundamental domain the Ford domain, between $0$ and $n$. Conjugate this subgroup by $t_{j_k}$. Finally, add in the generators $\left( \begin{smallmatrix}1 & n \\ 0 & 1\end{smallmatrix} \right) , \left( \begin{smallmatrix}1 & -2 \\ 1 & -1\end{smallmatrix} \right)$, and those involutions required to fill $[2,j_k-1]$. Do not conjugate these elements. The isometric spheres of these elements cover the remaining space between $0$ and $n$, so we obtain a finite area fundamental domain. 

\noindent {\bf Remark.} The construction above contains a great deal of flexibility, with more as $n$ increases. The involutions introduced by the operation of filling are simply there to ensure that the resulting subgroup has finite index; these involutions may be replaced by other generators of the form
\[ \begin{pmatrix}a & b \\ 1 & c \end{pmatrix} \]
as long as the intervals $[a-1,a+1]$ and $[-c-1,-c+1]$ are contained in the intervals which are filled in the appropriate choice of $H_i^n$. Therefore, to produce an example with at least $M$ non-conjugate elements of trace $x$, take a sufficiently large $N$ (for example, $N >> 2M + 2x$) and introduce $M$ elements of trace $x$ into the subgroup $H_N$.

\section{Bianchi Groups}\label{bianchisection}

In this section, we show that there are results analogous to Corollary \ref{pslzcor} for each Bianchi group. As above, there is a more general result involving any trace set that can be written as the union of finitely many (translates of) sublattices of the ring of integers. For brevity, we will only prove an analogue of Corollary \ref{pslzcor} for each Bianchi group, rather than an analogue of the more general Theorem \ref{mainthm}.

Consider the Bianchi group $\mathrm{PSL}_2(\mathcal{O}_d)$, where $d>0$ is a square-free integer, $\mathcal{O}_d$ denotes the ring of integers in the imaginary quadratic field $\mathbb{Q}(\sqrt{-d})$. It is a standard fact that $\mathcal{O}_d$ is an integer lattice in $\C$ generated by $1$ and $\omega = \sqrt{-d}$ (if $d \equiv 1, 2$ mod $4$) or by $1$ and $\omega = \frac{1+\sqrt{-d}}{2}$ (if $d \equiv 3$ mod $4$).

\begin{bianchi}Given any Bianchi group $\pslod$, there are infinitely many finite index subgroups $\Gamma < \pslod$ with the same complex trace set as $\pslod$.\end{bianchi}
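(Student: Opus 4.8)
The plan is to transpose the scheme of Theorem~\ref{general} and Corollary~\ref{modular} into $\Hys$, using the structure of $\mathcal{O}_d$ as a rank-two lattice in $\C$. The first step is to record the complex trace set: for every $\tau \in \mathcal{O}_d$ the matrix $\begin{pmatrix} \tau & -1 \\ 1 & 0\end{pmatrix}$ lies in $\mathrm{SL}_2(\mathcal{O}_d)$, is non-trivial, and has trace $\tau$, while conversely every trace of an element of $\pslod$ lies in $\mathcal{O}_d$; hence $\tr(\pslod) = \mathcal{O}_d$ (modulo sign). Writing $\mathcal{O}_d = \Z \oplus \Z\omega$, I would fix integers $m, \ell$ — to be taken large with $m$ comparable to $\ell|\omega|$ — put $\Lambda = \Z m \oplus \Z \ell\omega$, a sublattice of index $m\ell$ in $\mathcal{O}_d$, and pick a complete set of coset representatives $\tau_1, \dots, \tau_k$ of $\Lambda$ in $\mathcal{O}_d$, each with $|\tau_j|$ no larger than the covering radius of $\Lambda$. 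For each $j$ set
\[ G_j = \left\langle \begin{pmatrix} 1 & m \\ 0 & 1 \end{pmatrix}, \ \begin{pmatrix} 1 & \ell\omega \\ 0 & 1 \end{pmatrix}, \ \begin{pmatrix} \tau_j & -1 \\ 1 & 0 \end{pmatrix} \right\rangle ; \]
left-multiplying the third generator by products of powers of the two parabolics yields elements of trace $\tau_j + a m + b\ell\omega$ for all $a,b \in \Z$, so $\bigcup_{j=1}^{k}\tr(G_j) = \mathcal{O}_d = \tr(\pslod)$.

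The heart of the argument is the $\Hys$-analogue of Lemma~\ref{infinitelemma}: each $G_j$, which is visibly finitely generated, has an explicit finite-sided Ford fundamental domain of infinite volume, hence infinite index in $\pslod$. Let $g_j = \begin{pmatrix} \tau_j & -1 \\ 1 & 0\end{pmatrix}$; its isometric spheres have centres $0$ and $\tau_j$ and radius $1$. If $|\tau_j|>2$, or if $\tau_j\in\R$, these spheres (and their $\Lambda$-translates) bound a Ford domain $E_{g_j}\cap E_{g_j^{-1}}$ for $\langle g_j\rangle$; in the remaining case, where $g_j$ is loxodromic with overlapping isometric spheres, one instead uses the Ford domain of $\langle g_j\rangle$ cut out by the isometric spheres of all powers of $g_j$, each of which passes through a fixed point of $g_j$ and has radius at most $1$, so the whole configuration lies over a bounded neighbourhood of the segment joining the two fixed points of $g_j$ on $\partial\Hys$. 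Having taken $m,\ell$ large enough that this footprint embeds in a fundamental parallelogram $P_0$ for $\Lambda$, the chimney $D_P$ over $P_0$ is a fundamental domain for the parabolic subgroup $P = \langle\begin{pmatrix} 1 & m \\ 0 & 1\end{pmatrix}, \begin{pmatrix} 1 & \ell\omega \\ 0 & 1\end{pmatrix}\rangle$ with $D_P \cup (\text{Ford domain of }\langle g_j\rangle) = \Hys$ and nonempty intersection. Theorem~\ref{combthm} then gives $G_j = P \ast \langle g_j\rangle$ with fundamental domain $Q_j := D_P \cap (\text{Ford domain of }\langle g_j\rangle)$ — a chimney with finitely many hemispheres of radius $\leq 1$ removed, of infinite volume — so $G_j$ has infinite index, and $\overline{Q_j}$ contains an open region of $\partial\Hys$. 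A short list of small $d$ (where $\mathcal{O}_d$ is too short in the $\omega$-direction for this balancing, and, for $d=1,3$, where there are extra units) can be handled by writing down explicit generators, exactly as $n=2,3,4$ were handled in Corollary~\ref{gammazero}.

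With the $G_j$ and $Q_j$ in hand, I would run the proof of Theorem~\ref{general} verbatim with $\Hyp$ replaced by $\Hys$: choose $\alpha_j \in \pslod$ — a sufficiently high power of a loxodromic element whose two fixed points lie in a small disc inside $\overline{Q_j}\cap\partial\Hys$, the discs chosen disjoint for distinct $j$ — so that $\alpha_j\notin G_j$, $\overline{I_{\alpha_j}}\subset\overline{Q_j}$, and the $\overline{I_{\alpha_j^{-1}}}$ are pairwise disjoint. Then for $j\neq k$ the conjugated fundamental domains satisfy $\alpha_j(Q_j)\cup\alpha_k(Q_k)=\Hys$ and $\alpha_j(Q_j)\cap\alpha_k(Q_k)\neq\emptyset$, so repeated application of Theorem~\ref{combthm} shows that $H := \langle \alpha_1 G_1\alpha_1^{-1}, \dots, \alpha_k G_k\alpha_k^{-1}\rangle$ has fundamental domain $\bigcap_{j}\alpha_j(Q_j)$, which contains $\bigcap_j E_{\alpha_j^{-1}}$ and hence has infinite volume; so $H$ has infinite index in $\pslod$. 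Since conjugation preserves traces, $\tr(H)\supseteq\bigcup_j\tr(G_j)=\mathcal{O}_d=\tr(\pslod)$, and the reverse inclusion is automatic, so $\tr(H)=\tr(\pslod)$. Finally, the Bianchi groups are subgroup separable (by the cited work of Agol--Long--Reid, Agol, Calegari--Gabai and Canary), so the finitely generated, infinite-index subgroup $H$ is the intersection of the finite-index subgroups of $\pslod$ containing it; there must be infinitely many of these, and each such finite-index $K$ satisfies $\mathcal{O}_d=\tr(H)\subseteq\tr(K)\subseteq\tr(\pslod)=\mathcal{O}_d$, giving the desired infinite family.

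The main obstacle is the infinite-volume claim for the $G_j$, i.e.\ the $\Hys$ version of Lemma~\ref{infinitelemma}. Two features absent from the Fuchsian setting must be addressed: (i) $g_j$ is typically loxodromic, and when its isometric spheres overlap those of its powers are not absorbed by it, so one genuinely needs the Preliminaries' bound (each such sphere meets a fixed point of $g_j$ and has radius $\leq 1$) to confine the configuration; and (ii) because $\mathcal{O}_d$ is "stretched" for large $d$, the sublattice $\Lambda$ must be chosen stretched to match, and one must check the arithmetic that a balanced choice of $m$ and $\ell$ makes the covering radius of $\Lambda$ small enough relative to its shortest vector for the relevant footprints to embed in a fundamental parallelogram — plus the finite list of exceptional small $d$. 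Once that is in place, the Klein--Maskit combination and the LERF conclusion are routine copies of the Fuchsian arguments.
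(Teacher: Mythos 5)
Your proposal is correct and follows essentially the same route as the paper: finitely many subgroups, each generated by a rank-two parabolic subgroup fixing $\infty$ (over a sublattice of $\mathcal{O}_d$) together with one element $\begin{pmatrix} \tau & -1 \\ 1 & 0\end{pmatrix}$ realizing a coset of traces, each shown to have infinite index via an infinite-volume Ford domain (a chimney minus hemispheres), then conjugated so their isometric-sphere footprints are disjoint, combined by the Klein--Maskit theorem, and finished with LERF. The only differences are implementational: the paper fixes the sublattice $3\mathcal{O}_d$ with five coset representatives (using $x \sim -x$) and conjugates by explicit involutions, handling $d = 1,2,3$ as special cases, whereas you let the sublattice scale with $|\omega|$ and conjugate by high powers of loxodromic elements.
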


\begin{proof}As in the Fuchsian case, we construct finitely many subgroups which together generate every trace, and then conjugate them so that together they generate a subgroup of infinite index. We then appeal to the fact that the Bianchi groups are LERF. The following general method works whenever there are no non-rational integers $s \in \mathcal{O}_d \setminus \mathbb{Z}$ with modulus $|s|<2$; this includes all Bianchi groups $\pslod$ for $d \neq 1, 2, 3$, cases which we treat afterwards.

The subgroups which we take are the five subgroups of the form
\[ P_x = \left\langle \begin{pmatrix}x & -1 \\ 1 & 0\end{pmatrix}, \begin{pmatrix}1 & 3 \\ 0 & 1\end{pmatrix}, \begin{pmatrix}1 & 3\omega \\ 0 & 1\end{pmatrix} \right\rangle, \]
for $x \in \{ 0, 1, \omega, 1+\omega, 2+\omega \}$. In each case, a Ford domain for $P_i$ is bounded by isometric spheres of radius $1$ centered at $0$ and $x$ respectively, together with vertical planes which form the boundary of a fundamental domain for the parabolics fixing $\infty$. We conjugate the $P_i$ so that we may apply Theorem \ref{combthm}; to do this, we conjugate by involutions $\delta_i$ which have isometric spheres disjoint from those of the $P_i$ and from each other. If $d \equiv 1,2$ mod $4$, we take
\[ \delta_1 = \begin{pmatrix} 38 + 85 \omega & -17 -76\omega -85\omega^2 \\ 85 & -38 -85\omega \end{pmatrix} = \begin{pmatrix} 38 + 85 \omega & 85d -17 -76\omega \\ 85 & -38 -85\omega \end{pmatrix},\]
\[ \delta_\omega = \begin{pmatrix} 7 & -10 \\ 5 & -7 \end{pmatrix}, \delta_{1+\omega} = \begin{pmatrix} 68 & -125 \\ 37 & -68 \end{pmatrix}, \delta_{2+\omega} = \begin{pmatrix} 43 & -50 \\ 37 & -43 \end{pmatrix};\]
and if $d  \equiv 3$ mod $4$, we take
\[ \delta_1 = \begin{pmatrix} -2+3\omega+4d\omega & 1 +4\omega -7\omega^2 -4\omega^2d \\ 4d-1 & 2 -3\omega -4d\omega \end{pmatrix},\]
\[ \delta_\omega = \begin{pmatrix} 7 & -10 \\ 5 & -7 \end{pmatrix}, \delta_{1+\omega} = \begin{pmatrix} 7-5\omega & 5d-10+14\omega \\ 5 & 5\omega-7 \end{pmatrix}, \delta_{2+\omega} = \begin{pmatrix} 43 & -50 \\ 37 & -43 \end{pmatrix}.\]

For the three remaining cases, we choose conjugations specific to each case, and take care because isometric spheres of higher powers of the generators of the subgroups may appear. We treat the cases $d=1,2$ together. We take the same $P_i$ as above, and conjugations
\[ \delta_1 = \begin{pmatrix} 38 + 85 \omega & 85d -17 -76\omega \\ 85 & -38 -85\omega \end{pmatrix}, \delta_\omega = \begin{pmatrix} 43 & -50 \\ 37 & -43 \end{pmatrix}, \]
\[ \delta_{1+\omega} = \begin{pmatrix} 68 & -125 \\ 37 & -68 \end{pmatrix}, \delta_{2+\omega} = \begin{pmatrix} 91 & -101 \\ 82 & -91 \end{pmatrix}. \]
The last case is where $d=3$. We take
\[ \delta_1 = \begin{pmatrix} -2+3\omega+4d\omega & 1 +4\omega -7\omega^2 -4\omega^2d \\ 4d-1 & 2 -3\omega -4d\omega \end{pmatrix},  \delta_\omega = \begin{pmatrix} 43 & -50 \\ 37 & -43 \end{pmatrix}, \]
\[ \delta_{1+\omega} = \begin{pmatrix} 68-37\omega & 99\omega-88 \\ 37 & 37\omega-68 \end{pmatrix}, \delta_{2+\omega} = \begin{pmatrix} 68 & -125 \\ 37 & -68 \end{pmatrix}. \]

By construction, the subgroups $P_0$ and $\delta_x P_x \delta_x$ for $x \in \{ 1, \omega, 1+\omega, 2+\omega \}$ satisfy the hypotheses of Theorem \ref{combthm}, and so the group $H$ generated by $P_0$ and the $\delta_x P_x \delta_x$ is the free product of the generating subgroups, has infinite index in $\pslod$, and has all the same traces as $\pslod$. Since the Bianchi group is LERF, this implies that there exists a proper finite index subgroup $K_d < \pslod$ which contains this subgroup, but not
\[ g = \begin{pmatrix}1 & 1 \\ 0 & 1 \end{pmatrix}, \]
and therefore is a proper, finite index subgroup with the same trace set as the Bianchi group. Moreover, by the alternative formulation of LERF, $H$ can be written as the intersection of infinitely many finite index subgroups of $\pslod$, each of which therefore has trace set the same as $\pslod$. \end{proof}

\bibliographystyle{amsplain}
\bibliography{tracesetrefs}

\noindent Department of Mathematics \& Computer Science,\\
Eastern Illinois University,\\
600 Lincoln Avenue,\\
Charleston, IL 61920.\\
Email: gslakeland@eiu.edu

\end{document}